\newcommand{\thickhline}{%
    \noalign {\ifnum 0=`}\fi \hrule height 1pt
    \futurelet \reserved@a \@xhline
}
\newcolumntype{"}{@{\hskip\tabcolsep\vrule width 1pt\hskip\tabcolsep}}
\pgfplotsset{compat=newest}
\newif\ifblackandwhitecycle
\gdef\patternnumber{0}
        \gdef\patternnumber{1}
        \gdef\patternnumber{1}
     \gdef\patternnumber{0}
        \pgfgetlastxy{\imagewidth}{\imageheight}
        \global\let\imagewidth=\imagewidth
        \global\let\imageheight=\imageheight
        \gdef\columncount{1}
        \gdef\rowcount{1}
\newcommand\phantomimage{%
    \phantom{%
        \rule{\imagewidth}{\imageheight}%
    }%
}
\newcommand\zoombox[2][]{
    \begin{scope}[zoombox paths]
        \pgfmathsetmacro\xpos{
            (\columncount-1)*(\imagewidth / \pgfkeysvalueof{/tikz/zoomboxarray columns} + \pgfkeysvalueof{/tikz/zoomboxarray inner gap} / \pgfkeysvalueof{/tikz/zoomboxarray columns} ) + \pgflinewidth
        }
        \pgfmathsetmacro\ypos{
            (\rowcount-1)*( \imageheight / \pgfkeysvalueof{/tikz/zoomboxarray rows} + \pgfkeysvalueof{/tikz/zoomboxarray inner gap} / \pgfkeysvalueof{/tikz/zoomboxarray rows} ) + 0.5*\pgflinewidth
        }
        \edef\dospy{\noexpand\spy [
            #1,
            zoombox paths/.append style={
                black and white pattern=\patternnumber
            },
            every spy on node/.append style={#1},
            x=\imagewidth,
            y=\imageheight
        ] on (#2) in node [anchor=north west] at ($(zoomboxes container.north west)+(\xpos pt,-\ypos pt)$);}
        \dospy
        \pgfmathtruncatemacro\pgfmathresult{ifthenelse(\columncount==\pgfkeysvalueof{/tikz/zoomboxarray columns},\rowcount+1,\rowcount)}
        \global\let\rowcount=\pgfmathresult
        \pgfmathtruncatemacro\pgfmathresult{ifthenelse(\columncount==\pgfkeysvalueof{/tikz/zoomboxarray columns},1,\columncount+1)}
        \global\let\columncount=\pgfmathresult
        \ifblackandwhitecycle
            \pgfmathtruncatemacro{\newpatternnumber}{\patternnumber+1}
            \global\edef\patternnumber{\newpatternnumber}
        \fi
    \end{scope}
}
\newtheorem{theorem}{Theorem}
\newtheorem{lemma}[theorem]{Lemma}
\theoremstyle{definition}
\newtheorem{definition}{Definition}
\newtheorem{remark}{Remark}
\let\save@mathaccent\mathaccent
\newcommand*\if@single[3]{%
  \setbox0\hbox{${\mathaccent"0362{#1}}^H$}%
  \setbox2\hbox{${\mathaccent"0362{\kern0pt#1}}^H$}%
  \ifdim\ht0=\ht2 #3\else #2\fi
  }
\newcommand*\rel@kern[1]{\kern#1\dimexpr\macc@kerna}
\newcommand*\widebar[1]{\@ifnextchar^{{\wide@bar{#1}{0}}}{\wide@bar{#1}{1}}}
\newcommand*\wide@bar[2]{\if@single{#1}{\wide@bar@{#1}{#2}{1}}{\wide@bar@{#1}{#2}{2}}}
\newcommand*\wide@bar@[3]{%
  \begingroup
  \def\mathaccent##1##2{%
    \let\mathaccent\save@mathaccent
    \if#32 \let\macc@nucleus\first@char \fi
    \setbox\z@\hbox{$\macc@style{\macc@nucleus}_{}$}%
    \setbox\tw@\hbox{$\macc@style{\macc@nucleus}{}_{}$}%
    \dimen@\wd\tw@
    \advance\dimen@-\wd\z@
    \divide\dimen@ 3
    \@tempdima\wd\tw@
    \advance\@tempdima-\scriptspace
    \divide\@tempdima 10
    \advance\dimen@-\@tempdima
    \ifdim\dimen@>\z@ \dimen@0pt\fi
    \rel@kern{0.6}\kern-\dimen@
    \if#31
      \overline{\rel@kern{-0.6}\kern\dimen@\macc@nucleus\rel@kern{0.4}\kern\dimen@}%
      \advance\dimen@0.4\dimexpr\macc@kerna
      \let\final@kern#2%
      \ifdim\dimen@<\z@ \let\final@kern1\fi
      \if\final@kern1 \kern-\dimen@\fi
    \else
      \overline{\rel@kern{-0.6}\kern\dimen@#1}%
    \fi
  }%
  \macc@depth\@ne
  \let\math@bgroup\@empty \let\math@egroup\macc@set@skewchar
  \mathsurround\z@ \frozen@everymath{\mathgroup\macc@group\relax}%
  \macc@set@skewchar\relax
  \let\mathaccentV\macc@nested@a
  \if#31
    \macc@nested@a\relax111{#1}%
  \else
    \def\gobble@till@marker##1\endmarker{}%
    \futurelet\first@char\gobble@till@marker#1\endmarker
    \ifcat\noexpand\first@char A\else
      \def\first@char{}%
    \fi
    \macc@nested@a\relax111{\first@char}%
  \fi
  \endgroup
}
\newcommand{\R}{ \mathbb{R} }
\newcommand{\T}{ \mathbb{T} }
\newcommand{\Z}{ \mathbb{Z} }
\newcommand{\N}{ \mathbb{N} }
\newcommand{\LL}{ L }
\def \bPsi{\mathbf{\Psi}}
\def \balpha{\alpha}
\def \be{e}
\def \bzero{0}
\def \bm{m}
\newcommand{\norm}[1]{\left\Vert #1\right\Vert}
\newcommand{\abs}[1]{\left\vert #1\right\vert}
\newcommand{\opnorm}[1]{{\left\vert\kern-0.25ex\left\vert\kern-0.25ex\left\vert #1 \right\vert\kern-0.25ex\right\vert\kern-0.25ex\right\vert}}
\newcommand{\dotproduct}[2]{\ensuremath{\left\langle #1, #2 \right\rangle}\xspace}
\newcommand{\p}{\partial}
\DeclareMathOperator{\prox}{Prox}
\newcommand{\argmin}{\mathop{\mathrm{arg\,min}}}
\newcommand{\supp}{\mathop{\mathrm{supp}}}
\newcommand{\set}[1]{\left\{ #1 \right\}}
\newcommand{\diag}{\mathrm{diag}}
\newcommand{\fa}{ \textrm{for all }}
\newcommand{\blue}[1]{#1}
\begin{document}
\title{Real-time $\ell^1-\ell^2$ deblurring using wavelet expansions of operators}

\author{Paul Escande \footnotemark[1] \and Pierre Weiss \footnotemark[2] }

\maketitle

\renewcommand{\thefootnote}{\fnsymbol{footnote}}
\footnotetext[1]{D\'epartement d'Ing\'enierie des Syst\`emes Complexes (DISC), Institut Sup\'erieur de l'A\'eronautique et de l'Espace (ISAE), Toulouse, France,  \url{paul.escande@gmail.com} }
\footnotetext[2]{Institut des Technologies Avanc\'ees en Sciences du Vivant, ITAV-USR3505 and Institut de Math\'ematiques de Toulouse, IMT-UMR5219, CNRS and universit\'e de Toulouse, Toulouse, France, \url{pierre.armand.weiss@gmail.com}}

\begin{abstract}
Image deblurring is a fundamental problem in imaging, usually solved with computationally intensive optimization procedures. 
We show that the minimization can be significantly accelerated by leveraging the fact that images and blur operators are compressible in the same orthogonal wavelet basis. 
The proposed methodology consists of three ingredients: i) a sparse approximation of the blur operator in wavelet bases, ii) a diagonal preconditioner and iii) an implementation on massively parallel architectures. 
Combing the three ingredients leads to acceleration factors ranging from $30$ to $250$ on a typical workstation. 
For instance, a $1024\times 1024$ image can be deblurred in $0.15$ seconds, which corresponds to real-time.
\end{abstract}

\textbf{Keywords:} sparse wavelet expansion, preconditioning, GPU programming, image deblurring, inverse problems.

\medskip

\textbf{AMS:} 65F50, 65R30, 65T60, 65Y20, 42C40, 45Q05, 45P05, 47A58

\newpage

\section{Introduction} \label{sec:introduction}

Most imaging devices produce blurry images. 
This degradation very often prevents to correctly interpret image contents and sometimes ruins expensive experiments.
One of the most advertised examples of that type is Hubble space telescope \footnote{The total cost of Hubble telescope is estimated at 10 billions US Dollars \cite{HubblePrice}.}, which was discovered to suffer from severe optical aberrations after being launched. Such situations occur on a daily basis in fields such as biomedical imaging, astronomy or conventional photography. 

Starting from the seventies, a large number of numerical methods to deblur images was therefore developed. 
The first methods were based on linear estimators such as the Wiener filter \cite{wiener1949extrapolation}. 
They were progressively replaced by more complicated nonlinear methods, incorporating prior knowledge on the image contents. 
We refer the interested reader to the following review papers  \cite{starck2002deconvolution,wang2014recent,pustelnik2015wavelet} to get an overview of the available techniques.

Despite providing better reconstruction results, the most efficient methods are often disregarded in practice, due to their high computational complexity, especially for large 2D or 3D images. 
The goal of this paper is to develop new numerical strategies that significantly reduce the computational burden of image deblurring. The proposed ideas yield a \emph{fast deblurring method}, compatible with \emph{large data} and \emph{routine use}. They allow handling both \emph{stationary and spatially varying blurs}. The proposed algorithm does not reach the state-of-the-art in terms of image quality, because the prior is too simple, but still performs well in short computing times.

\subsection{Image formation and image restoration models}

In this paper, we assume that the observed image $u_0$ reads:
\begin{equation}
 u_0=Hu +b,
\end{equation}
where $u\in \R^N$ is the clean image we wish to recover, $b\sim \mathcal{N}(0,\sigma^2 I_N)$ is a white Gaussian noise of standard deviation $\sigma$ and $H\in \R^{N\times N}$ is a blurring operator. Loosely speaking, a blurring operator replaces the value of a pixel by a mean of its neighbours. A precise definition will be given in Section \ref{sec:theory}. 

Let $\Psi\in \R^{N\times N}$ denote an orthogonal wavelet transform and let $A=H\Psi$.
A standard variational formulation to restore $u$ consists of solving:
\begin{equation}\label{eq:l1problem}
 \min_{x\in \R^N} E(x) = F(x) + G(x).
\end{equation}
In this equation, $F(x)$ is a quadratic data fidelity term defined by 
\begin{equation}\label{eq:defF}
F(x)=\frac{1}{2}\|A x- u_0\|_2^2.
\end{equation}
The regularization term $G(x)$ is defined by:
\begin{equation}\label{def:G}
G(x) = \|x\|_{1,w}=\sum_{i=1}^N w[i]|x[i]| 
\end{equation}
The vector of weights $w\in \R^N$ is a regularization parameter that may vary across subbands of the wavelet transform. 
The weighted $\ell^1$-norm is well known to promote sparse vectors. 
This is usually advantageous since images are compressible in the wavelet domain.
Overall, problem \eqref{eq:l1problem} consists of finding an image $\Psi x$ consistent with the observed data $u_0$ with a sparse representation $x$ in the wavelet domain.


Many groups worldwide have proposed minimizing similar cost functions in the literature, see e.g.  \cite{figueiredo2003algorithm,neelamani2004forward,vonesch2008fast,vonesch2009fast}. 
The current trend is to use redundant dictionaries $\Psi$ such as the undecimated wavelet transforms or learned transforms instead of orthogonal transforms \cite{starck2003wavelets,chaux2007variational,chai2007deconvolution,cai2010framelet}. 
This usually allows reducing reconstruction artifacts.
We focus here on the case where $\Psi$ is orthogonal. 
This property will help designing much faster algorithms.

\subsection{Standard optimization algorithms}

A lot of algorithms based on proximity operators were designed in the last decade to solve convex problems of type \eqref{eq:l1problem}.
We refer the reader to the review papers \cite{beck2009gradient,combettes2011proximal} to get an overview of the available techniques.
A typical method is the accelerated proximal gradient descent, also known as FISTA (Fast Iterative Soft Thresholding Algorithm) \cite{beck2009fast}. By letting $\|A\|_2$ denote the largest singular value of $A$, it takes the form described in Algorithm \ref{alg:FISTA}. This method got very popular lately due to its ease of implementation and relatively fast convergence.

\begin{algorithm}
\caption{Accelerated proximal gradient descent}
\label{alg:FISTA}
\begin{algorithmic}[1]
\State \textbf{input:} Initial guess $x^{(0)}=y^{(1)}$, $\tau=1/\|A\|_2^2$ and $Nit$.
\For{$k=1$ to $Nit$}
\State Compute $\nabla F(y^{(k)}) = A^*(A y^{(k)} -u_0 )$. \Comment{\textcolor{red}{\textbf{99.35$''$}}}
\State $x^{(k)} = \prox_{\tau G } \left( y^{(k)} - \tau \nabla F(y^{(k)}) \right)$. \Comment{2.7$''$}
\State  $y^{(k+1)} = x^{(k)} + \frac{k-1}{k+2} (x^{(k)} - x^{(k-1)})$. \Comment{1.1$''$}
\EndFor
\end{algorithmic}
\end{algorithm}

Let us illustrate this method on a practical deconvolution experiment. We use a $1024\times 1024$ image and assume that $H u = h\star u$, where $\star$ denotes the discrete convolution product and $h$ is a motion blur described on Figure \ref{fig:PSF_movement}. In practice, the PSNR of the deblurred image stabilizes after 500 iterations. The computing times on a workstation with Matlab and mex-files is around 103$''$15. The result is shown on Figure \ref{fig:deconv_book}. Profiling the code leads to the computing times shown on the right-hand-side of Algorithm \ref{alg:FISTA}. As can be seen, 96\% of the computing time is spent in the gradient evaluation. This requires computing two wavelet transforms and two fast Fourier transforms. 
This simple experiment reveals that two approaches can be used to reduce computing times:
\begin{itemize}
 \item \emph{Accelerate gradients computation.}
 \item \emph{Use more sophisticated minimization algorithms to accelerate convergence.} 
\end{itemize}

\subsection{The proposed ideas in a nutshell}

The method proposed in this paper relies on three ideas. First, function $F$ in equation \eqref{eq:defF} can be approximated by another function $F_K$ such that $\nabla F_K$ is inexpensive to compute. 
Second, we characterize precisely the structure of the Hessian of $F_K$, allowing to design efficient preconditioners.
Finally, we implement the iterative algorithm on a GPU.
The first two ideas, which constitute the main contribution of this paper, are motivated by our recent observation that spatially varying blur operators are compressible and have a well characterized structure in the wavelet domain \cite{escande2015sparse}. We showed that matrix 
\begin{equation}
 \Theta=\Psi^* H \Psi,
\end{equation}
which differs from $H$ by a change of basis, has a particular banded structure, with many negligible entries. 
Therefore, one can construct a $K$-sparse matrix $\Theta_K$ (i.e. a matrix with at most $K$ non zero entries) such that $\Theta_K\simeq \Theta$.

\paragraph{Problem approximation.}

Using the fact that $\Psi$ is an orthogonal transform allows writing that:
\begin{align}
  &\min_{x\in \R^N} \frac{1}{2}\|H\Psi x- u_0\|_2^2 + \|x\|_{1,w} \\
  & = \min_{x\in \R^N} \frac{1}{2}\|\Psi^* ( H\Psi x- u_0) \|_2^2 + \|x\|_{1,w} \\
 &=\min_{x\in \R^N} \frac{1}{2}\|\Theta x- x_0 \|_2^2 + \|x\|_{1,w}, \label{eq:l1problemTheta}
\end{align}
where $x_0=\Psi^*u_0$ is the wavelet decomposition of $u_0$. Problem \eqref{eq:l1problemTheta} is expressed entirely in the wavelet domain, contrarily to problem \eqref{eq:l1problem}. However, matrix-vector products with $\Theta$ might be computationally expensive. We therefore approximate the variational problem \eqref{eq:l1problemTheta} by:
\begin{equation}
\min_{x\in \R^N} \frac{1}{2}\|\Theta_K x- x_0 \|_2^2 + \|x\|_{1,w}. \label{eq:l1problemThetaApprox} 
\end{equation}

Now, let $F_K(x)=\frac{1}{2}\|\Theta_K x- x_0 \|_2^2$. The gradient of $F_K$ reads:
\begin{equation}\label{eq:gradientapprox}
\nabla F_K(x) = \Theta_K^* (\Theta_K - x_0),
\end{equation}
and therefore requires computing two matrix-vector products with sparse matrices. 
Computing the approximate gradient \eqref{eq:gradientapprox} is usually much cheaper than computing $\nabla F(x)$ exactly using Fast Fourier transforms and fast wavelet transforms. This may come as a surprise since they are respectively of complexity $O(N\log(N))$ and $O(N)$. In fact, we will see that in favorable cases, the evaluation of $\nabla F_K(x)$ may require about $2$ operations per pixel!

\paragraph{Preconditioning.}
The second ingredient of our method relies on the observation that the Hessian matrix $H_{F_K} (x) = \Theta_K^* \Theta_K$ has a near diagonal structure with decreasing entries on the diagonal. 
This allows designing \emph{efficient preconditioners}, which reduces the number of iterations necessary to reach a satisfactory precision. In practice preconditioning leads to acceleration factors ranging from $2$ to $5$.

\paragraph{GPU implementation}

Finally, using massively parallel programming on graphic cards still leads to an acceleration factor of order $10$ on an NVIDIA K20c.
Of course, this factor could be improved further by using more performant graphic cards.
Combining the three ingredients leads to algorithms that are from $30$ to $250$ times faster than FISTA algorithm applied to \eqref{eq:l1problem}, which arguably constitutes the current state-of-the-art.

\subsection{Related works}

The idea of characterizing integral operators in the wavelet domain appeared nearly at the same time as wavelets, at the end of the eighties. 
Y. Meyer characterized many properties of Calder\'on-Zygmund operators in his seminal book \cite{meyer1989wavelets}. 
Later, Beylkin, Coifman and Rokhlin \cite{beylkin1991fast}, showed that those theoretical results may have important consequences for the fast resolution of partial differential equations and the compression of matrices.
Since then, the idea of using multiscale representations has been used extensively in numerical simulation of physical phenomena. The interested reader can refer to \cite{cohen2003numerical} for some applications. 

Quite surprisingly, it seems that very few researchers attempted to apply them in imaging. 
In \cite{chang2000wavelet,Malgouyres2002309}, the authors proposed to approximate integral operators by matrices diagonal in the wavelet domain. Our experience is that diagonal approximations are too crude to provide sufficiently good approximations. More recently the authors of \cite{wei2014fast,escande2015sparse} proposed independently to compress operators in the wavelet domain. However they did not explore its implications for the fast resolution of inverse problems.

On the side of preconditioning, the two references \cite{vonesch2008fast,vonesch2009fast} are closely related to our work. The authors designed a few preconditioners to accelerate the convergence of the proximal gradient descent (also called thresholded Landweber algorithm or Iterative Soft Thresholding Algorithm). 
Overall, the idea of preconditioning is therefore not new.
To the best of our knowledge, our contribution is however the first that is based on a precise understanding of the structure of $\Theta$.

\subsection{Paper outline}

The paper is structured as follows. We first provide some notation and definitions in section \ref{sec:notation}. 
We then provide a few results characterizing the structure of blurring operators in section \ref{sec:theory}. 
We propose two simple explicit preconditioners in section \ref{sec:optimization}.
Finally, we perform numerical experiments and comparisons in section \ref{sec:numerical}.
\section{Notation} \label{sec:notation}

In this paper, we consider $d$ dimensional images.
To simplify the discussion, we use circular boundary conditions and work on the $d$-dimensional torus $\Omega=\T^d$, where $\T^d=\R^d \backslash \Z^d$. 
The space $\LL^2(\Omega)$ denotes the space of squared integrable functions defined on $\Omega$. 

Let $\balpha=(\alpha_1,\hdots, \alpha_d)$ denote a multi-index. 
The sum of its components is denoted $|\balpha|=\sum_{i=1}^d \alpha_i$. 
The Sobolev spaces $W^{M,p}$ are defined as the set of functions $f \in \LL^p$ with partial derivatives up to order $M$ in $\LL^p$ where $p \in [1, + \infty]$ and $M \in \N$. 
These spaces, equipped with the following norm are Banach spaces
\begin{equation}
  \norm{f}_{W^{M,p}} = \norm{f}_{\LL^p} + \abs{f}_{W^{M,p}}, \quad \text{ where,} \quad \abs{f}_{W^{M,p}} = \sum_{\abs{\balpha} = M } \norm{\p^{\balpha} f}_{\LL^p},
\end{equation}
where $\p^{\balpha} f = \frac{\partial^{\alpha_1}}{\partial x_1^{\alpha_1}}\hdots \frac{\partial^{\alpha_d}}{\partial x_d^{\alpha_d}} f$.


Let us now define a wavelet basis on $\LL^2(\Omega)$. 
To this end, we first introduce a 1D wavelet basis on $\T$. 
Let $\phi$ and $\psi$ denote the scaling and mother wavelets and assume that the mother wavelet $\psi$ has $M$ vanishing moments, i.e. 
\begin{equation}
	\fa 0 \leq m < M, \quad \int_{[0,1]} t^m \psi(t) dt = 0.
\end{equation}

Translated and dilated versions of the wavelets are defined, for $j \geq 0$, as follows
\begin{equation}
  \phi_{j,l} = 2^{j/2} \phi\left( 2^{j} \cdot - l \right),
\end{equation}
\begin{equation}\label{eq:defwavelets}
  \psi_{j,l} = 2^{j/2} \psi\left( 2^{j} \cdot - l \right),
\end{equation}
with $l \in \mathcal{T}_j$ and $\mathcal{T}_j = \{0,\ldots,2^j-1\}$.

In dimension $d$, we use isotropic separable wavelet bases, see, e.g., \cite[Theorem 7.26, p. 348]{Mallat-Book}. 
Let $\bm=(m_1,\hdots,m_d)$.
Define $\rho_{j,l}^0=\phi_{j,l}$ and $\rho_{j,l}^1=\psi_{j,l}$. 
Let $\be=(e_1,\hdots,e_d)\in \{0,1\}^d$. 
For the ease of reading, we will use the shorthand notation $\lambda = (j,m,e)$ and $|\lambda|=j$. 
We also let 
\begin{equation}
\Lambda_0 = \set{ (j,m,e) \; | \; j \in \Z, \; m \in \mathcal{T}_j, \; e \in \set{0,1}^d} 
\end{equation}
 and  
\begin{equation}
\Lambda = \set{ (j,m,e) \; | \; j \in \Z, \; m \in \mathcal{T}_j, \; e \in \set{0,1}^d \setminus \{0\} }. 
\end{equation}
Wavelet $\blue{ \psi_\lambda} $ is defined by $ \blue{\psi_{\lambda}(x_1, \ldots, x_d)} = \psi_{j,\bm}^\be(x_1,\hdots,x_d)=\rho_{j,m_1}^{e_1}(x_1)\hdots \rho_{j,m_d}^{e_d}(x_d)$.
Elements of the separable wavelet basis consist of tensor products of scaling and mother wavelets at the same scale. 
Note that if $\be\neq \bzero$ wavelet $\psi_{j,\bm}^\be$ has $M$ vanishing moments in $\R^d$.
We let $\displaystyle I_{j,m}=\cup_{e} \supp{\psi^e_{j,m}}$ and $I_{\lambda} = \supp{ \psi_{\lambda}}$.
The distance between the supports of $\psi_{\lambda}$ and $\psi_{\mu}$ is defined by
\begin{align}
	\mathrm{dist}(I_{\lambda},I_{\mu}) &= \inf_{x\in I_{\lambda},\, y\in I_{\mu}} \|x-y\|_\infty \\
	& = \max \left(0, \norm{2^{-j}m - 2^{-k}n}_\infty - (2^{-j} + 2^{-k})\frac{c(M)}{2}\right).
\end{align}

With these definitions, every function $f\in\LL^2(\Omega)$ can be written as
\begin{align}
 u & = \dotproduct{u}{\psi^0_{0,0}} \psi^0_{0,0} + \sum_{e\in \{0,1\}^d \setminus \{0\}} \sum_{j=0}^{+\infty}\sum_{m \in \mathcal{T}_j} \dotproduct{u}{\psi^e_{j,m}} \psi^e_{j,m} \\
	& \blue{ = \dotproduct{u}{\psi^0_{0,0}} \psi^0_{0,0} + \sum_{ \lambda \in \Lambda} \dotproduct{u}{\psi_{\lambda}} \psi_\lambda } \\
	& \blue{ = \sum_{ \lambda \in \Lambda_0} \dotproduct{u}{\psi_{\lambda}} \psi_\lambda }.
\end{align}

Finally, we let $\Psi^* : \LL^2(\Omega)  \rightarrow  l^2(\Z)$ denote the wavelet decomposition operator and $\Psi : l^2(\Z) \rightarrow \LL^2(\Omega)$ its associated reconstruction operator. 
The discrete wavelet transform is denoted $\bPsi:\R^N\to \R^N$. 
We refer to \cite{Mallat-Book, daubechies_ten_1992,cohen1993wavelets} for more details on the construction of wavelet bases.

\section{Wavelet decompositions of blurring operators}\label{sec:theory}

In this section we remind some results on the decomposition of blurring operators in the wavelet domain.

\subsection{Definition of blurring operators}

A blurring operator $H$ can be modelled by a linear integral operator $H : \LL^2(\Omega) \to \LL^2(\Omega)$:
\begin{equation} \label{eq:int_op}
	\forall x \in \Omega, \quad Hu(x) = \int_{\Omega} K(x,y) u(y) dy.
\end{equation}
The function $K : \Omega \times \Omega \to \R$ is called kernel of the integral operator and defines the Point Spread Function (PSF) $K(\cdot, y)$ at location $y \in \Omega$. The image $Hu$ is the blurred version of $u$. 
Following our recent paper \cite{escande2015sparse}, we define blurring operators as follows.

\begin{definition}[Blurring operators \cite{escande2015sparse}] \label{def:blurring_operators}
	Let $M\in \N$ and $f:[0,1]\to \R_+$ denote a non-increasing bounded function.
	An integral operator is called a blurring operator in the class $\mathcal{A}(M,f)$ if it satisfies the following properties: 
	\begin{enumerate}
		\item Its kernel $K\in W^{M,\infty}(\Omega\times \Omega)$; 
		\item The partial derivatives of $K$ satisfy:
		\begin{enumerate}
		  \item \label{def:blurring_operators:PSFsmoothness} 
		  \begin{equation} \label{eq:blurring_operators:dx_decay}
			  \forall \abs{\alpha} \leq M, \ \forall (x,y) \in \Omega\times \Omega, \quad \abs{\p_x^\alpha K(x,y)} \leq f\left( \norm{x-y}_\infty \right),
		  \end{equation}
		  \item \label{def:blurring_operators:PSFVariationSmoothness}
		  \begin{equation} \label{eq:blurring_operators:dy_decay}
			  \forall \abs{\alpha} \leq M, \ \forall (x,y) \in \Omega\times \Omega, \quad \abs{\p_y^\alpha K(x,y)} \leq f \left( \norm{x-y}_\infty \right).
		  \end{equation}
	  \end{enumerate}
	\end{enumerate}
\end{definition}

Condition \eqref{eq:blurring_operators:dx_decay} means that the PSF is smooth, while condition \eqref{eq:blurring_operators:dy_decay} indicates that the PSFs vary smoothly. 
These regularity assumptions are met in a large number of practical problems. 
In addition, they allow deriving theorems similar to those of the seminal papers of Y. Meyer, R. Coifman, G. Beylkin and V. Rokhlin \cite{meyer2000wavelets, beylkin1991fast}. Those results basically state that an operator in the class $\mathcal{A}(M,f)$ can be represented and computed efficiently when decomposed in a wavelet basis. We make this key idea precise in the next paragraph.

\subsection{Decomposition in wavelet bases}

Since $H$ is defined on the Hilbert space $\LL^2(\Omega)$, it can be written as $H = \Psi \Theta \Psi^*$, where $\Theta : \ell^2(\Z) \to \ell^2(\Z)$ is the infinite matrix representation of the operator $H$ in the wavelet domain. 
Matrix $\Theta$ is characterized by the coefficients:
\begin{equation} \label{eq:definition_theta}
	\theta_{\lambda, \mu} = \Theta[\lambda,\mu]= \dotproduct{ H \psi_{\lambda} }{ \psi_{\mu} }.
\end{equation}
The following result provides a good bound on their amplitude.
\begin{theorem}[Representation of blurring operator in wavelet bases \cite{escande2015sparse}] \label{thm:representation}

Let $f_{\lambda, \mu} = f\left( \mathrm{dist}(I_{\lambda},I_{\mu})  \right)$ and assume that:
\begin{itemize}
	\item Operator $H$ belongs to the class $\mathcal{A}(M,f)$ (see Definition \ref{def:blurring_operators}).
	\item The mother wavelet is compactly supported with $M$ vanishing moments.
\end{itemize}

Then for all $\lambda = (j,m,e) \in \Lambda$ and $\mu = (k,n,e') \in \Lambda$, with $e,e'\neq 0$:
\begin{equation} \label{eq:decay}
  \abs{\theta_{\lambda, \mu} } \leq C_M 2^{-\left(M + \frac{d}{2} \right)\abs{j-k}} 2^{-\min(j,k)\left(M + d\right)} f_{\lambda,\mu},
\end{equation}
where $C_M$ is a constant that does not depend on $\lambda$ and $\mu$.
\end{theorem}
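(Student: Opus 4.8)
The plan is to express $\theta_{\lambda,\mu}$ as a double integral against the kernel and then to exploit the vanishing moments of whichever of the two wavelets lives at the \emph{finer} scale. From \eqref{eq:int_op} and \eqref{eq:definition_theta},
\[
  \theta_{\lambda,\mu}=\int_\Omega\int_\Omega K(x,y)\,\psi_\lambda(y)\,\psi_\mu(x)\,dy\,dx .
\]
I would first reduce to the case $j=|\lambda|\le k=|\mu|$. The class $\mathcal{A}(M,f)$ is symmetric in the two arguments of the kernel (conditions \eqref{eq:blurring_operators:dx_decay} and \eqref{eq:blurring_operators:dy_decay} are mirror images of each other), so the case $k<j$ will follow from the case $j\le k$ by interchanging the roles of $x$ and $y$, of $\psi_\lambda$ and $\psi_\mu$ (the former having $M$ vanishing moments since $e\neq0$), and of \eqref{eq:blurring_operators:dx_decay} and \eqref{eq:blurring_operators:dy_decay} throughout.

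Assume $j\le k$. Since $e'\neq0$, the wavelet $\psi_\mu$ has $M$ vanishing moments in $\R^d$ and hence annihilates every polynomial of degree $<M$. The crux of the argument is to subtract, for each fixed $y$, the degree-$(M-1)$ Taylor polynomial $P_y$ of $x\mapsto K(x,y)$ about the center $c_\mu$ of the cube $I_\mu=\supp \psi_\mu$; this does not change $\theta_{\lambda,\mu}$, so one may replace $K(x,y)$ by the Taylor remainder $K(x,y)-P_y(x)$. Invoking \eqref{eq:blurring_operators:dx_decay}, the multivariate Lagrange form of the remainder, the convexity of the cube $I_\mu$ (so that the intermediate point $\xi$ stays in $I_\mu$, on which $\|\xi-y\|_\infty\ge\mathrm{dist}(I_\lambda,I_\mu)$ whenever $y\in I_\lambda$), the monotonicity of $f$, and the bound $\|x-c_\mu\|_\infty\le\frac{c(M)}{2}2^{-k}$ valid on $I_\mu$, one obtains
\[
  \bigl|K(x,y)-P_y(x)\bigr|\le C_M'\,2^{-kM}\,f_{\lambda,\mu}\qquad(x\in I_\mu,\ y\in I_\lambda),
\]
where $C_M'$ depends only on $M$ and $d$ (it collects the number $\binom{M+d-1}{d-1}$ of multi-indices of order $M$ and the factor $(c(M)/2)^M$).

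It then remains to integrate. Pulling out $\int_\Omega|\psi_\lambda|=\|\psi_\lambda\|_{L^1}$ and $\int_\Omega|\psi_\mu|=\|\psi_\mu\|_{L^1}$ gives $|\theta_{\lambda,\mu}|\le C_M'\,2^{-kM}f_{\lambda,\mu}\,\|\psi_\lambda\|_{L^1}\|\psi_\mu\|_{L^1}$, and the separable structure of the basis together with the $L^2$-normalization $\rho^e_{j,l}=2^{j/2}\rho^e(2^j\cdot-l)$ yields $\|\psi_\lambda\|_{L^1}\le C''2^{-jd/2}$ and $\|\psi_\mu\|_{L^1}\le C''2^{-kd/2}$ with $C''=\max(\|\phi\|_{L^1},\|\psi\|_{L^1})^d$. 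Hence $|\theta_{\lambda,\mu}|\le C_M\,2^{-kM-kd/2-jd/2}f_{\lambda,\mu}$, and since $j\le k$ the exponent equals
\[
  -\Bigl(M+\tfrac d2\Bigr)(k-j)-j(M+d)=-\Bigl(M+\tfrac d2\Bigr)|j-k|-\min(j,k)(M+d),
\]
which is precisely the bound \eqref{eq:decay}.

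I expect the main obstacle to be the remainder estimate of the middle paragraph: running the $d$-dimensional Taylor expansion cleanly with multi-indices, checking that every intermediate evaluation point stays inside $I_\mu$ so that \eqref{eq:blurring_operators:dx_decay} genuinely applies and so that the non-increasing bound $f$ can be frozen at the value $\mathrm{dist}(I_\lambda,I_\mu)$, and confirming that the emerging constant depends on $M$ and $d$ only, not on the scales $j,k$ or the positions $m,n$. By contrast the power counting — the factor $2^{-kM}$ from the vanishing moments and the factors $2^{-jd/2}$, $2^{-kd/2}$ from the $L^1$ normalizations recombining into the anisotropic decay — is routine once the ordering $j\le k$ has been fixed, and the only other point requiring a little care is the clean reduction of the case $k<j$ to this one using the $x\leftrightarrow y$ symmetry of $\mathcal{A}(M,f)$.
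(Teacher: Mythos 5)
Your argument is correct: using the vanishing moments of the finer-scale wavelet to subtract the degree-$(M-1)$ Taylor polynomial of the kernel, bounding the remainder by $2^{-\max(j,k)M}f_{\lambda,\mu}$ via the derivative bounds and monotonicity of $f$, and then paying the $L^1$-norms $2^{-jd/2}2^{-kd/2}$ reproduces exactly the exponent $-(M+\tfrac d2)|j-k|-\min(j,k)(M+d)$. This is essentially the same Beylkin--Coifman--Rokhlin-style proof given in the cited reference \cite{escande2015sparse}, to which the present paper defers for this theorem.
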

The coefficients of $\Theta$ decay exponentially with respect to the scale difference and also as a function of the distance between the two wavelets supports.

\subsection{Approximation in wavelet bases}

In order to get a representation of the operator in a finite dimensional setting, the wavelet representation can be truncated at scale $J$. Let $\Theta^{(J)}$ denote the infinite matrix defined by:
\begin{equation}
 \Theta^{(J)}[\lambda,\mu] = \left\{\begin{array}{ll}
                                      \theta_{\lambda,\mu} & \textrm{if } |\lambda|\leq J \textrm{ and } |\mu|\leq J, \\
                                      0 & \textrm{otherwise}.
                                     \end{array}\right.
\end{equation}
This matrix contains at most $N^2$ non-zero coefficients, where $N=1+\sum_{j=0}^{J-1}(2^d-1) 2^{dj}$ denotes the numbers of wavelets kept to represent functions. 
The operator $H^{(J)} = \Psi\Theta^{(J)} \Psi^*$ is an approximation of $H$. 
The following theorem is a variation of \cite{beylkin1991fast}. 
Loosely speaking, it states that $H^{(J)}$ can be well approximated by a matrix containing only $O(N)$ coefficients.
\begin{theorem}[Computation of blurring operators in wavelet bases \cite{escande2015sparse}] \label{thm:proof_thresh}
Set $0\leq \eta\leq \log_2(N)^{-(M+d)/d}$. 
Let $\Theta^{(J)}_\eta$ be the matrix obtained by zeroing all coefficients in $\Theta^{(J)}$ such that 
\begin{equation}
2^{-min(j,k)(M+d)} f_{\lambda, \mu} \leq \eta. 
\end{equation}
Define $H^{(J)}_\eta=\Psi \Theta^{(J)}_\eta \Psi^*$.
Under the same hypotheses as Theorem \eqref{thm:representation}, the number of coefficients needed to satisfy $\norm{H^{(J)} - H^{(J)}_{\eta}}_{2 \rightarrow 2} \leq \epsilon$ is bounded above by
\begin{equation}\label{eq:wavelet_quality}
  C'_M N \; \epsilon^{-\frac{d}{M}}
\end{equation}
where $C'_M>0$ is independent of $N$.
\end{theorem}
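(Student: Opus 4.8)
\medskip
\noindent\emph{Proof strategy.} The plan is to turn the operator-norm estimate into a scalar row-sum bound via Schur's test, then to run two independent dyadic estimates — one bounding the approximation error in terms of the threshold $\eta$, one bounding the number of surviving coefficients in terms of $\eta$ — and finally to combine them by setting $\eta$ to an appropriate power of $\epsilon$. Write $D_\eta := \Theta^{(J)} - \Theta^{(J)}_\eta$, so that $D_\eta[\lambda,\mu]=\theta_{\lambda,\mu}$ precisely when $\abs{\lambda},\abs{\mu}\le J$ and $2^{-\min(\abs{\lambda},\abs{\mu})(M+d)}f_{\lambda,\mu}\le\eta$, and $D_\eta[\lambda,\mu]=0$ otherwise. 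Since $\Psi$ is orthogonal, $\|H^{(J)}-H^{(J)}_\eta\|_{2\to 2}=\|D_\eta\|_{2\to 2}$. Both the bound \eqref{eq:decay} and the zeroing condition are symmetric under $\lambda\leftrightarrow\mu$ (they involve only $\abs{j-k}$, $\min(j,k)$ and $f_{\lambda,\mu}$), so Schur's test yields $\|D_\eta\|_{2\to 2}\le\sup_{\lambda}\sum_{\mu}\abs{D_\eta[\lambda,\mu]}$, and it suffices to bound this row sum uniformly over $\lambda=(j,m,e)\in\Lambda$. The $O(1)$ coefficients involving the coarse scaling function $\psi^0_{0,0}$ fall outside Theorem~\ref{thm:representation}; they are never zeroed (hence do not appear in $D_\eta$) and are in any case controlled by $\|H\|_{2\to 2}$.

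\medskip
\noindent\emph{Step 1: error versus threshold.} Fix $\lambda$ at scale $j$ and split the row sum $\sum_{\mu}\abs{D_\eta[\lambda,\mu]}$ over the partner scale $k=\abs{\mu}$. On the zeroed set, combining \eqref{eq:decay} with $2^{-\min(j,k)(M+d)}f_{\lambda,\mu}\le\eta$ produces the two complementary bounds $\abs{\theta_{\lambda,\mu}}\le C_M\,2^{-(M+d/2)\abs{j-k}}\,\eta$ and $\abs{\theta_{\lambda,\mu}}\le C_M\,2^{-(M+d/2)\abs{j-k}}\,2^{-\min(j,k)(M+d)}f(0)$. For fixed $k$, the number of spatial indices $n$ (and of the $O(1)$ signs $e'$) for which $\mathrm{dist}(I_\lambda,I_\mu)$ lies below a prescribed radius is controlled — using the monotonicity of $f$, the dyadic structure, and the boundedness of $\T^d$ — by the corresponding lattice-point count. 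Inserting these, summing over $n$ at each scale, and then summing the geometric series $\sum_k 2^{-(M+d/2)\abs{j-k}}$ over scales, the row sum is found to be largest near the transition scale $j\simeq(M+d)^{-1}\log_2(f(0)/\eta)$ and obeys there
\begin{equation}\label{eq:planerror}
  \|H^{(J)}-H^{(J)}_\eta\|_{2\to 2}\;\le\; C_M\,\eta^{\frac{M}{M+d}}.
\end{equation}
The hypothesis $\eta\le\log_2(N)^{-(M+d)/d}$ keeps the argument in the regime where the $J\simeq\log_2 N$ scales are absorbed into $C_M$ without degrading this exponent.

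\medskip
\noindent\emph{Step 2: counting surviving coefficients.} Consider a pair $(\lambda,\mu)$ and write $j=\min(\abs{\lambda},\abs{\mu})\le k=\max(\abs{\lambda},\abs{\mu})$, which is harmless by the symmetry already used. The pair survives only if $f_{\lambda,\mu}>\eta\,2^{j(M+d)}$, i.e. only if $\mathrm{dist}(I_\lambda,I_\mu)<\delta_j:=\sup\{t\in[0,1]:f(t)>\eta\,2^{j(M+d)}\}$; since $f$ is bounded, $\delta_j=0$ once $2^{j(M+d)}>f(0)/\eta$, that is for $j>j^\star\simeq(M+d)^{-1}\log_2(f(0)/\eta)$. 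For fixed $\lambda$ at scale $j\le j^\star$ and a finer scale $k\ge j$, the number of admissible $\mu=(k,n,e')$ is at most a constant times $(2^k\delta_j+2^{k-j}+1)^d$. Summing over $k\in[j,J]$, then over the $O(2^{dj})$ wavelets at scale $j$, then over $j\le j^\star$, and using $2^{dj^\star}\le C_M(f(0)/\eta)^{d/(M+d)}$ together with $\delta_j\le 1$, the dominant $2^k\delta_j$ term gives
\begin{equation}\label{eq:plancount}
  \#\{\text{nonzero coefficients of }\Theta^{(J)}_\eta\}\;\le\; C_M\,N\,\eta^{-\frac{d}{M+d}},
\end{equation}
the remaining contributions being of lower order and dominated by the right-hand side.

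\medskip
\noindent\emph{Step 3 and main obstacle.} Choosing $\eta$ so that the right-hand side of \eqref{eq:planerror} equals $\epsilon$, i.e. $\eta\simeq\epsilon^{(M+d)/M}$ — admissible in the stated regime — and substituting into \eqref{eq:plancount} gives $\#\{\text{nonzero coefficients}\}\le C'_M\,N\,\epsilon^{-d/M}$, which is \eqref{eq:wavelet_quality}. The delicate point is Step~1: because $f$ is only assumed non-increasing and bounded, with no quantitative decay rate, the spatial sums $\sum_n\abs{\theta_{\lambda,\mu}}$ at each scale must be tamed using solely the monotonicity of $f$, the scale factor $2^{-\min(j,k)(M+d)}$ from Theorem~\ref{thm:representation}, and the compactness of $\T^d$, and this has to be carried out with constants that are genuinely independent of $N$ — uniform in the number of scales $J$. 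Extracting the sharp exponent $M/(M+d)$ from the competition between the threshold $\eta$ and the inter-scale decay $2^{-(M+d/2)\abs{j-k}}$ is the heart of the matter; the counting in Step~2 and the final optimization over $\eta$ are then routine.
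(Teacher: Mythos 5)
First, a point of reference: this manuscript never proves Theorem~\ref{thm:proof_thresh}; it is quoted from \cite{escande2015sparse}, so there is no in-paper proof to compare you against, and your argument has to stand on its own. Your overall architecture is the standard (and, I believe, the intended) one: reduce to $\|\Theta^{(J)}-\Theta^{(J)}_\eta\|_{2\to 2}$ by orthogonality of $\Psi$, prove an error bound of the form $C_M\,\eta^{M/(M+d)}$ and a count bound of the form $C_M\,N\,\eta^{-d/(M+d)}$, then choose $\eta\simeq\epsilon^{(M+d)/M}$; the exponent bookkeeping is consistent with \eqref{eq:wavelet_quality}, and your Step~2 counting (lattice points within $\delta_j$ of a support, summed over scales up to the cutoff $j^\star\simeq(M+d)^{-1}\log_2(f(0)/\eta)$) is essentially routine and fine.

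The genuine gap is that the decisive estimate --- your Step~1 bound $\|H^{(J)}-H^{(J)}_\eta\|_{2\to2}\le C_M\,\eta^{M/(M+d)}$ with $C_M$ independent of $N$ --- is asserted rather than proved, and you yourself flag it as ``the heart of the matter''; moreover the route you sketch does not obviously close. Fix a row $\lambda$ at scale $j$ and a finer partner scale $k\ge j$: on the zeroed set each entry is bounded via \eqref{eq:decay} by $C_M 2^{-(M+d/2)(k-j)}\eta$, but there are up to $O(2^{dk})$ such entries, and the only tool you allow yourself (monotonicity and boundedness of $f$, which gives $\sum_n f(\mathrm{dist}(I_\lambda,I_\mu))\lesssim 2^{dk}\,\eta\,2^{j(M+d)}$ on that set) does not reduce this multiplicity; the scale-$k$ contribution to the Schur row sum is therefore of order $2^{dj}2^{-(M-d/2)(k-j)}\eta$, and the series over $k$ converges only when $M>d/2$. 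For $M\le d/2$ (e.g. $d=2$ with $M=1$, which is allowed by the hypotheses) the row sum as you set it up picks up a factor growing with $J\simeq\log_2 N$ or worse, a crude Frobenius bound does not repair it, and your only remark on this --- that the hypothesis $\eta\le\log_2(N)^{-(M+d)/d}$ ``absorbs the $J$ scales'' --- is never substantiated; note that this hypothesis is simply equivalent to $\log_2 N\le\eta^{-d/(M+d)}$, whose natural use is in Step~2 (it lets $O(N\log_2 N)$ near-diagonal terms be absorbed into $C_M N\eta^{-d/(M+d)}$), not in rescuing the operator-norm estimate. Until the uniform-in-$N$ row/column-sum (or equivalent) estimate behind $\eta^{M/(M+d)}$ is actually carried out for all admissible $M$ and $d$, with the role of the hypothesis on $\eta$ made explicit, what you have is a correct plan with the central step unexecuted, not a proof.
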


This theorem has important consequences for numerical analysis. 
It states that evaluations of $H$ can be obtained with an $\epsilon$ accuracy using only $O(N \epsilon^{-\frac{d}{M}})$ operations. Note that the smoothness $M$ of the kernel is handled automatically.
Of interest, let us mention that \cite{meyer2000wavelets,beylkin1991fast} proposed similar results under less stringent conditions. 
In particular, similar inequalities may still hold true if the kernel blows up on the diagonal. 

\subsection{Discretization}\label{subsec:discretization}

In the discrete setting, the above results can be exploited as follows. 
Given a matrix $H \in \R^{N\times N}$ that represents a discretized version of $H$, we perform the change of basis:
\begin{equation}\label{eq:matrixH}
 \Theta = \Psi^* H\Psi,
\end{equation}
where $\Psi\in \R^{N\times N}$ is the discrete isotropic separable wavelet transform. 
Similarly to the continuous setting, matrix $\Theta$ is essentially concentrated along the diagonals of the wavelet subbands (see Figure \ref{fig:illustration_compression}). 

Probably the main reason why this decomposition has very seldom been used in practice is that it is very computationally demanding.
To compute the whole set of coefficients $(\langle H \psi_\lambda , \psi_\mu\rangle)_{\lambda,\mu}$, one has to apply $H$ to each of the $N$ discrete wavelets and then compute a decomposition of the blurred wavelet. This requires $O(N^3)$ operations, since blurring a wavelet is a matrix-vector product which costs $O(N^2)$ operations. This computational burden is intractable for large signals. We will see that it becomes tractable for \emph{convolution operators} in section \ref{subsec:convolutions}.

\subsection{Illustration}

In order to illustrate the various results provided so far, let us consider an operator acting on 1D signals, with kernel defined by 
\begin{equation}
K(x,y) = \frac{1}{\sigma(y) \sqrt{2\pi}} \exp{ \left( - \frac{(x-y)^2}{2\sigma^2(y)} \right)}, 
\end{equation}
where $\sigma(y) = 4 + 10 y$. All PSFs are Gaussians with a variance that increases linearly. The matrix is displayed in linear scale (resp. log scale) in Figure \ref{fig:illustration_compression} top-left (resp. top-right).
The wavelet representation of the matrix is displayed in linear scale (resp. log scale) in Figure \ref{fig:illustration_compression} bottom-left (resp. bottom-right). As can be seen, the matrix expressed in the wavelet basis is sparser than in the space domain. It has a particular banded structure captured by Theorem \ref{thm:representation}.
\begin{figure}[htp]
\centering
\begin{subfigure}[b]{0.45\textwidth}
	\includegraphics[width=\textwidth]{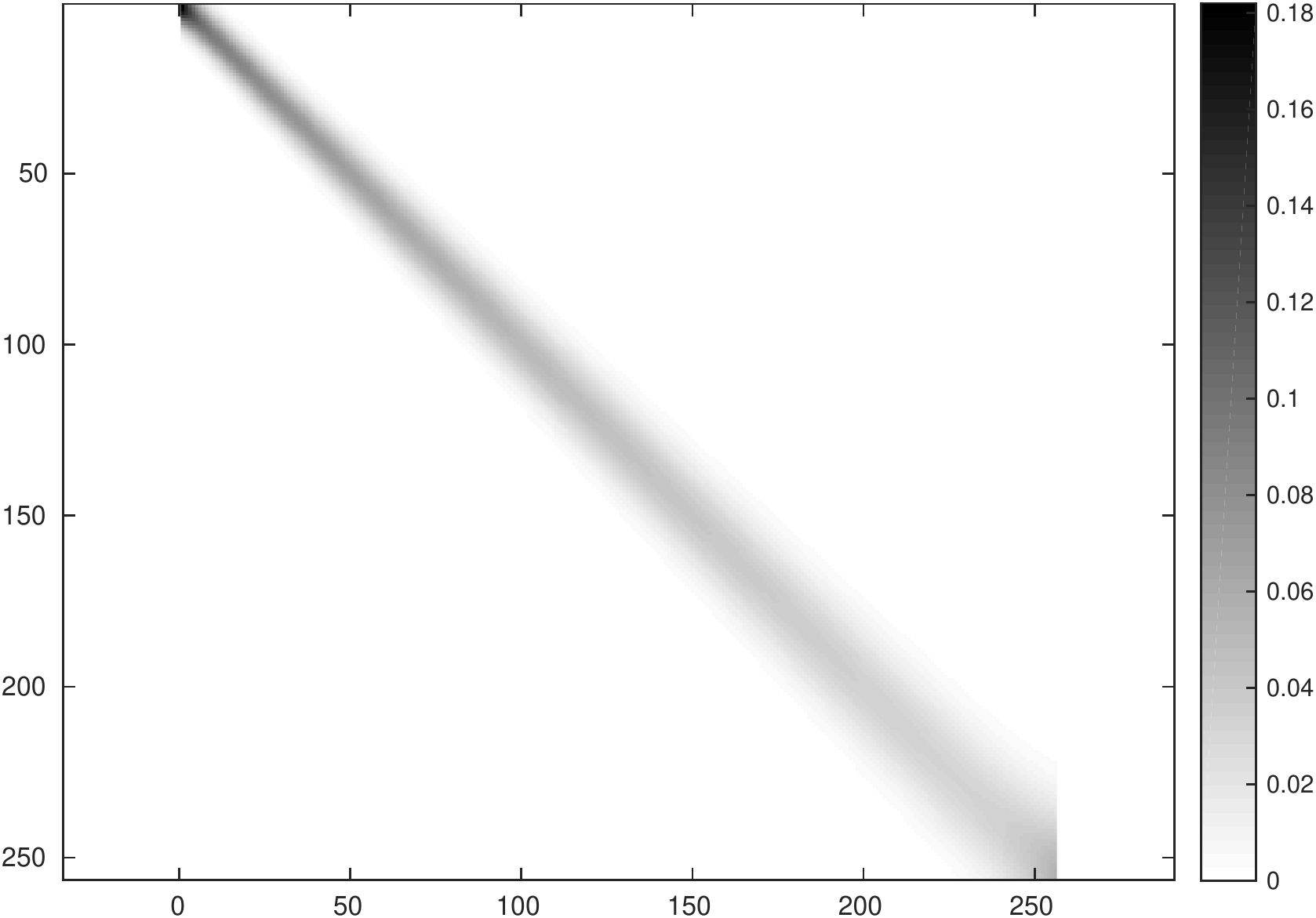}
\end{subfigure}
\quad
\begin{subfigure}[b]{0.45\textwidth} 
	\includegraphics[width=\textwidth]{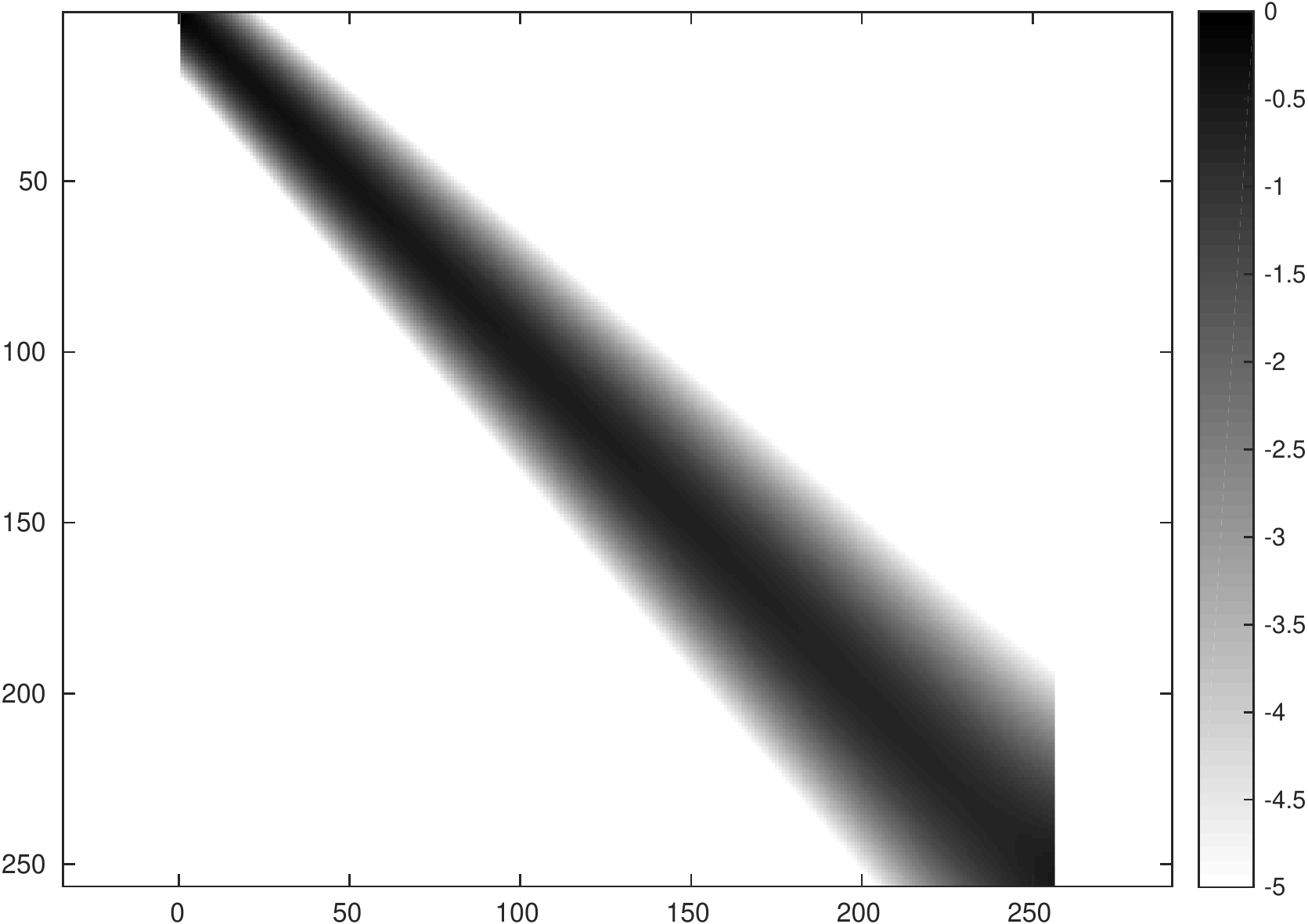}
\end{subfigure}

\begin{subfigure}[b]{0.45\textwidth}
	\includegraphics[width=\textwidth]{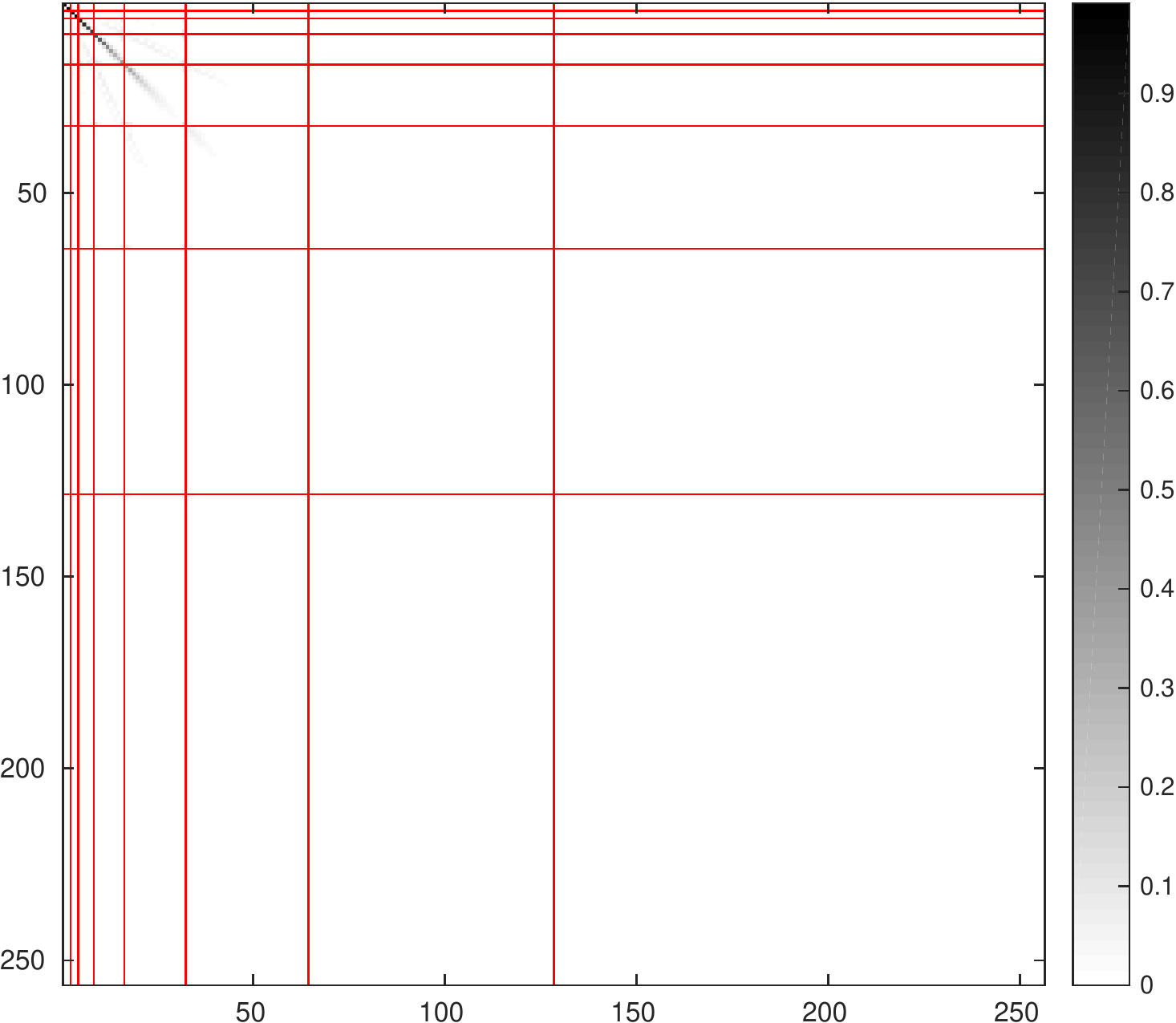}
\end{subfigure}
\quad
\begin{subfigure}[b]{0.45\textwidth} 
	\includegraphics[width=\textwidth]{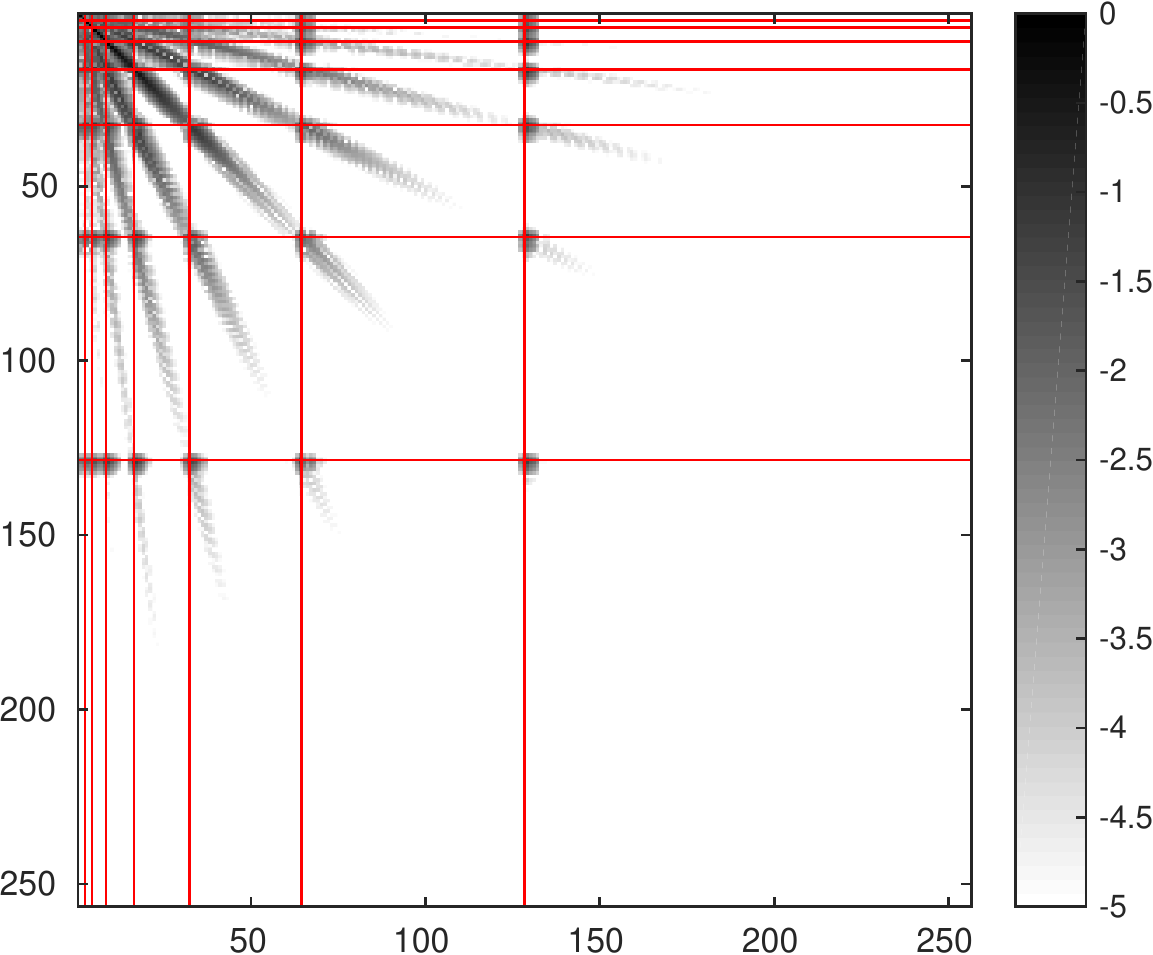}
\end{subfigure}
	\caption{An illustration of the compression of a spatially varying blur in the wavelet domain. Top-left: $H$. Top-right: $H$ in $\log_{10}$-scale. Bottom-left: $\Theta$ obtained using Daubechies wavelets with 10 vanishing moments and a decomposition level $J = 7$. Bottom-right: $\Theta$ in $\log_{10}$-scale.} \label{fig:illustration_compression}
\end{figure}

\subsection{Decomposition of convolutions}
\label{subsec:convolutions}

From now on, we assume that $H$ is a convolution with a kernel $h$. 
The results below hold both in the continuous and the discrete setting. 
We establish them in the discrete setting to ease the implementation.
Matrix $\Theta$ can be decomposed into its wavelet subbands:
\begin{equation}
	\Theta = \left( \Theta_{j,k}^{e,e'} \right)_{j,k}, \quad \text{with } \Theta_{j,k}^{e,e'} = \left( \dotproduct{H \psi_{j,m}^e}{\psi_{k,n}^{e'}}\right)_{m \in \mathcal{T}_j, n \in \mathcal{T}_k}.
\end{equation}
For instance, on 1D signals with $J = 2$, matrix $\Theta$ can be decomposed as shown in Figure \ref{fig:structure_convolution}, left. Let us now describe the specific structure of the subbands $\Theta_{j,k}^{e,e'}$ for convolutions. 
We will need the following definitions.

\begin{definition}[Translation operator]
Let $a\in \R^N$ denote a $d$-dimensional image and $m\in \Z^d$ denote a shift. 
The translated image $b=\tau_m(a)$ is defined for all $i_1,\hdots,i_d$ by:
\begin{equation}
b[i_1,\hdots,i_d] = a[i_1-m_1,\hdots,i_d-m_d]
\end{equation}
with circular boundary conditions.
\end{definition}

\begin{definition}[Rectangular circulant matrices]
	Let $A\in \R^{2^j \times 2^k}$ denote a rectangular matrix. It is called circulant if and only if:
	\begin{itemize}
		\item When $k \geq j$ : 	there exists $a \in \R^{2^k}$, such that, for all $0 \leq l \leq 2^j-1$, 
			\begin{equation*}
				A[l,:] = \tau_{2^{k-j} l}(a).
			\end{equation*}
		\item When $k < j$ : 	there exists  $a\in \R^{2^j}$, such that, for all $0 \leq l \leq 2^k-1$, 
			\begin{equation*}
				A[:,l] = \tau_{2^{j-k}l}(a).
			\end{equation*}
	\end{itemize}		
	As an example a $4 \times 8$ circulant matrix is of the form:
		\begin{equation*}
			A = \begin{pmatrix}
				\color{red}{a_1} 	& \color{blue}{a_2} & a_3 & a_4 & a_5 & a_6 & a_7 & a_8 \\
				a_7 & a_8 & \color{red}{a_1} & \color{blue}{a_2} & a_3 & a_4 & a_5 & a_6 \\
				a_5 & a_6 & a_7 & a_8 & \color{red}{a_1} & \color{blue}{a_2} & a_3 & a_4 \\
				a_3 & a_4 & a_5 & a_6 & a_7 & a_8 & \color{red}{a_1} & \color{blue}{a_2}
			\end{pmatrix}.
		\end{equation*}
	
\end{definition}
\begin{figure}[htp]
\centering
\begin{subfigure}[b]{0.31\textwidth}
	\includegraphics[width=\textwidth]{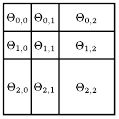}
\end{subfigure}
\quad
\begin{subfigure}[b]{0.35\textwidth} 
	\includegraphics[width=\textwidth]{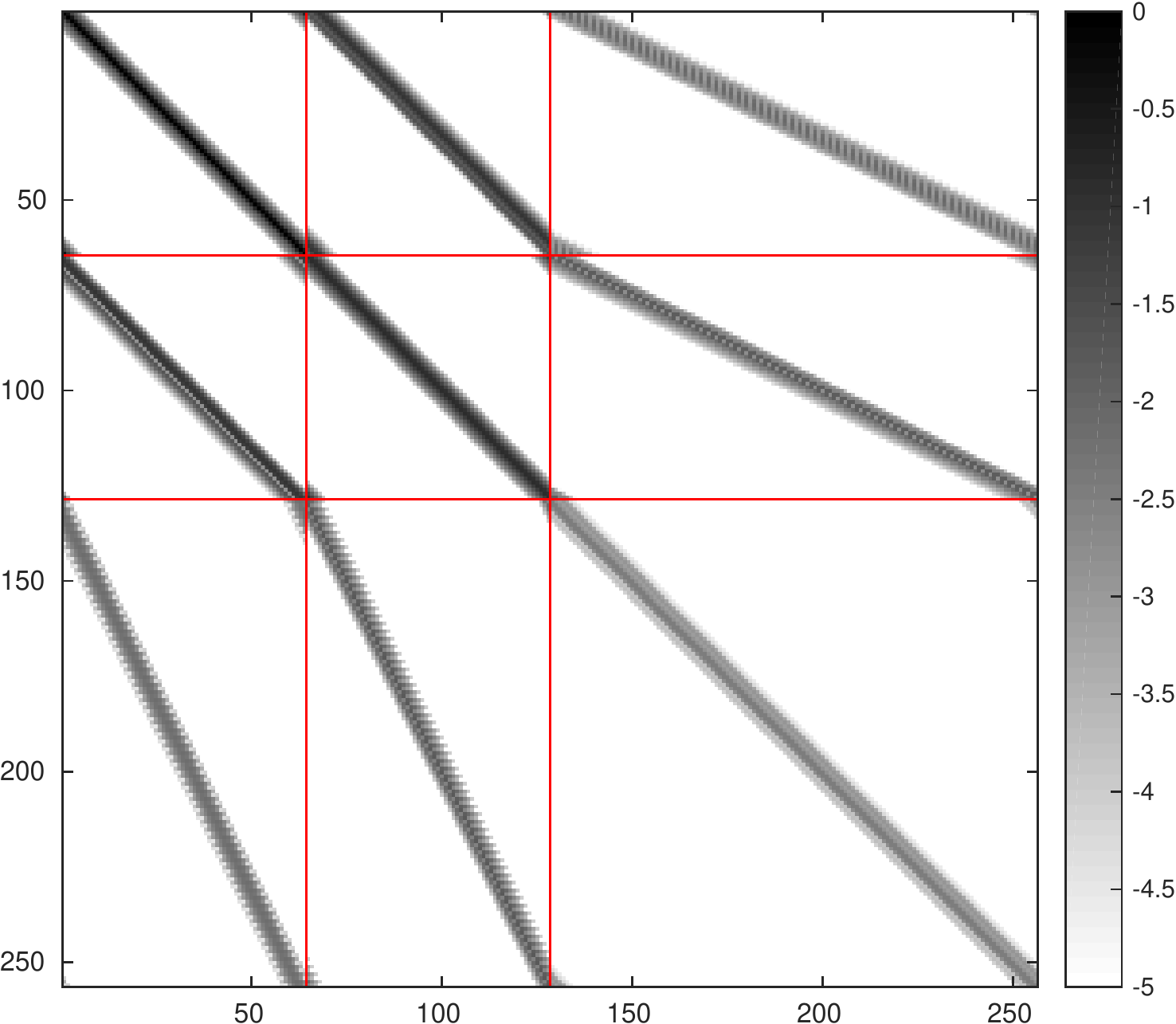}
\end{subfigure}
	\caption{Left: structure of $\Theta$. Right: $\Theta$ in $\log_{10}$-scale when $H$ is a convolution with a Gaussian kernel.}  \label{fig:structure_convolution}
\end{figure}

Theorem \ref{thm:theta_convolution} states that all the wavelet subbands of $\Theta$ are circulant for convolution operators. This is illustrated on Figure \ref{fig:structure_convolution}, right.

\begin{theorem}[Circulant structure of $\Theta$] \label{thm:theta_convolution}
Let $H$ be a convolution matrix and define $\Theta=\Psi^*H\Psi$ be its wavelet representation. 
Then, for all $j,k \in [0,J]$ and $e, e' \in \set{0,1}^d$, the submatrices $\Theta_{j,k}^{e,e'}$ are circulant.
\end{theorem}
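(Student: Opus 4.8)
The plan is to reduce everything to a single structural fact: a circular convolution commutes with every circular translation, while the discrete wavelets inside a fixed subband are circular translates of one generator. First I would record two elementary observations. (i) Since $H$ is a circular convolution matrix, $H\tau_m=\tau_m H$ for every shift $m\in\Z^d$ (immediate from $(Hu)[i]=\sum_j h[i-j]u[j]$). (ii) With circular boundary conditions the discrete translation $\tau_m$ is orthogonal on $\R^N$ and $\tau_m^*=\tau_{-m}$, so that $\langle\tau_a u,\tau_b v\rangle=\langle u,\tau_{b-a}v\rangle$ for all $a,b$.

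Second, I would make the ``translates of one generator'' statement precise. Writing $N=2^J$, the formula $\psi_{j,l}=2^{j/2}\psi(2^j\cdot-l)$ together with $2^J=N$ shows that in the periodized discrete setting each one-dimensional factor satisfies $\rho^e_{j,l}=\tau_{2^{J-j}l}\bigl(\rho^e_{j,0}\bigr)$ (the scaling function included). Taking tensor products, $\psi^e_{j,m}=\tau_{2^{J-j}m}\bigl(\psi^e_{j,\mathbf 0}\bigr)$, where $\tau_{2^{J-j}m}$ denotes the shift by $2^{J-j}m_i$ in coordinate $i$. Hence at scale $j$ the $2^{dj}$ wavelets of a given type $e$ are precisely the circular $2^{J-j}$-step shifts of the one at position $0$.

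Third, I would substitute these into the definition $\Theta_{j,k}^{e,e'}=\bigl(\langle H\psi^e_{j,m},\psi^{e'}_{k,n}\rangle\bigr)_{m,n}$. For $m\in\mathcal T_j$, $n\in\mathcal T_k$, using (i) to move $H$ past the translation and then (ii) to gather both translations on one side,
\[
\Theta_{j,k}^{e,e'}[m,n]=\bigl\langle H\psi^e_{j,\mathbf 0},\ \tau_{\,2^{J-k}n-2^{J-j}m}\,\psi^{e'}_{k,\mathbf 0}\bigr\rangle =: c\bigl[2^{J-k}n-2^{J-j}m\bigr],
\]
where $c[p]=\langle H\psi^e_{j,\mathbf 0},\tau_p\psi^{e'}_{k,\mathbf 0}\rangle$ is a single vector indexed by $p\in(\Z/N\Z)^d$. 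It then remains to recognize this as the announced circulant structure: when $k\ge j$ one puts $a[n]=c[2^{J-k}n]$, and row $m$ of $\Theta_{j,k}^{e,e'}$ becomes $n\mapsto a[n-2^{k-j}m]=\tau_{2^{k-j}m}(a)[n]$, which is exactly the definition of a rectangular circulant matrix; the case $k<j$ is symmetric with $a[m]=c[-2^{J-j}m]$ acting on columns, and for $d>1$ the same identities hold coordinatewise, so ``circulant'' is read in the multilevel (block-circulant-with-circulant-blocks) sense.

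I expect the only delicate point to be the periodization bookkeeping of the second step: one must check carefully that the $2^{dj}$ discrete wavelets of each type really are the circular $2^{J-j}$-shifts of a single generator, coarse scaling subband included, and that all index arithmetic is taken modulo $N$. Once that is in place, the remaining manipulations are just the commutation and unitarity identities (i)--(ii), and the argument applies verbatim to the square blocks $j=k$ and to the coarse-scale blocks of $\Theta$.
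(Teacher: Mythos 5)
Your proof is correct and follows essentially the same route as the paper's: both arguments rest on the facts that a circular convolution commutes with circular translations, that $\tau_m^*=\tau_{-m}$, and that the wavelets within a fixed subband are circular shifts of a single generator, so each entry of $\Theta_{j,k}^{e,e'}$ depends only on the shifted index difference. Your version is merely a more explicit bookkeeping of the same computation (treating both cases $k\ge j$ and $k<j$, and writing the shifts in pixel units $2^{J-j}m$, which is the consistent reading of the paper's $\tau_{2^k n}$ notation), so no further comment is needed.
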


\begin{proof}
We only treat the case $j \geq k$, since the case $j<k$ is similar. 
Let $a\in \R^{2^j}$ be defined by:
\begin{equation}
 a[m]= \dotproduct{h\star \psi_{j,m}^e}{ \psi_{k,0}^{e'}}.
\end{equation}

We have:
	\begin{align}
		\dotproduct{h\star \psi_{j,m}^e}{ \psi_{k,n}^{e'}} &= 
		\dotproduct{h\star \psi_{j,m}^e}{ \tau_{2^k n} \left( \psi_{k,0}^{e'}\right) } \\
		&= \dotproduct{\tau_{-2^k n} \left( h\star \psi_{j,m}^e \right)}{ \psi_{k,0}^{e'}} \\
		&= \dotproduct{h\star \tau_{-2^k n} \left( \psi_{j,m}^e \right)}{ \psi_{k,0}^{e'}} \\
		&= \dotproduct{h\star \psi_{j,m-2^{j-k}n}^e}{ \psi_{k,0}^{e'}} \\
		&= a[m-2^{j-k}n].
	\end{align}
	The submatrix $\Theta_{j,k}^{e,e'}$ is therefore circulant.
	In this list of identities, we only used the fact that the adjoint of $\tau_{2^k n}$ is $\tau_{-2^k n}$ and the fact that translations and convolution commute. 
\end{proof}

The main consequence of Theorem \ref{thm:theta_convolution} is that the computation of matrix $\Theta$ reduces to computing one column or one row of each matrix $\Theta_{j,k}^{e,e'}$. This can be achieved by computing $(2^d-1)J$ wavelet transforms (see Algorithm \ref{algo:btheta_conv}). The complexity of computing $\Theta$ therefore reduces to $\mathcal{O}\left( (2^d-1)J N \right)$ operations instead of $\mathcal{O}(N^3)$ operations for spatially varying operators.

\begin{algorithm}
\caption{An algorithm to compute $\Theta$ for convolution operator}
\label{algo:btheta_conv}
\begin{algorithmic}[1]
\State \textbf{input:} $h\in \R^N$, the convolution kernel of $H$.
\State \textbf{output:} $\Theta$, the wavelet representation of $H$
\For{$(j,e) \in [0,J] \times \set{0,1}^d$}
\State Compute the wavelet $\psi_{\lambda}$ with $\lambda = (j,e, 0)$.
\State Compute the blurred wavelets $H \psi_{\lambda}$ and $H^* \psi_{\lambda}$.
\State Compute $\left(\dotproduct{H \psi_{\lambda}}{\psi_{\mu}}\right)_{\mu}$ using one forward wavelet transform.
\State Compute $\left(\dotproduct{H^* \psi_{\lambda}}{\psi_{\mu}}\right)_{\mu}$ using one forward wavelet transform.
  \For{$(k,e') \in [0,J] \times \set{0,1}^d$}
   		\If{ $k \geq j$}
   			\State $\Theta_{j,k}^{e,e'}$ is the circulant matrix with column: $\left(\dotproduct{H \psi_{\lambda}}{\psi_{k,n}^{e'}}\right)_{n}$
   		\Else
   			\State $\Theta_{j,k}^{e,e'}$ is the circulant matrix with row: $\left(\dotproduct{H^* \psi_{\lambda}}{\psi_{k,n}^{e'}}\right)_{n} = \left(\dotproduct{\psi_{\lambda}}{H \psi_{k,n}^{e'}}\right)_{n} $
   		\EndIf
   	\EndFor
  \EndFor
\end{algorithmic}
\end{algorithm}

\subsection{Thresholding strategies}
\label{subsec:thresholdstrategies}

Theorem \ref{thm:proof_thresh} ensures that one can construct good sparse approximations of $\Theta$. 
However, the thresholding strategy suggested by the theorem turns out to be impractical.

In this section, we propose efficient thresholding strategies. 
Most of the proposed ideas come from our recent work \cite{escande2015sparse}, and we refer to this paper for more details. 
The algorithm specific to convolution operators is new.

Let us define the operator norm:
\begin{equation}
\|H\|_{X\to Y} = \sup_{\|u\|_X\leq 1} \|Hu\|_Y,
\end{equation}
where $\|\cdot\|_X$ and $\|\cdot\|_Y$ denote two norms on $\R^N$.
A natural way to obtain a $K$-sparse approximation $\Theta_K$ of $\Theta$ consists of finding the minimizer of:
\begin{equation}\label{eq:thresholding}
 \min_{\Theta_K, \ K\textrm{-sparse}} \|H_K - H\|_{X\to Y},
\end{equation}
where $H_K=\Psi \Theta_K \Psi^*$.
The most naive thresholding strategy consists of constructing a matrix $\Theta_K$ such that:
\begin{equation}\label{eq:naivethresholding}
	\Theta_K[\lambda, \mu] = \left\{ \begin{aligned}
	\Theta[\lambda, \mu] &\quad \text{if $|\Theta[\lambda, \mu]|$ is among the $K$ largest values of $|\Theta|$}, \\
	0 	& \quad \text{otherwise}.
	\end{aligned} \right.
\end{equation}
This thresholding strategy can be understood as the solution of the minimization problem \eqref{eq:thresholding}, by setting 
$\|\cdot\|_X=\|\cdot\|_1$ and $\|\cdot\|_Y=\|\cdot\|_\infty$.

The $\ell^1$-norm of the wavelet coefficients is not adapted to the description of images. 
Natural images are often modeled as elements of Besov spaces or the space of bounded variation functions \cite{petrushev1999nonlinear,aubert2006mathematical}. 
These spaces can be characterized by the decay of their wavelet coefficients \cite{cohen2003numerical} across subbands. 
This motivates to set $\norm{ \cdot}_X = \norm{ \Sigma \cdot}_1$ where $\Sigma = \textrm{diag}(\sigma) \in \R^{N\times N}$ is a diagonal matrix and where $\sigma\in \R^N$ is constant by levels. 
The thresholding strategy using this metric can be expressed as the minimization problem:
\begin{equation}
\min_{\Theta_K,\   K\text{- sparse}} \sup_{\norm{\Sigma x}_1 \leq 1} \norm{ (\Theta - \Theta_K) x}_{\infty}.
\end{equation}
Its solution is given in closed-form by:
\begin{equation}\label{eq:cleverthreshold}
	\Theta_K[\lambda, \mu] = \left\{ \begin{aligned}
				  \Theta[\lambda, \mu]  &\quad \text{if $\left|\sigma_{\mu} \Theta[\lambda, \mu] \right|$ is among the $K$ largest values of $|\Theta \Sigma|$,} \\
				  0 	& \quad \text{otherwise}.
			\end{aligned} \right.
\end{equation}
The weights $\sigma_\mu$ must be adapted to the class of images to recover. 
In practice we found that setting $\sigma_\mu = 2^{-k}$ for $\mu = (k,e',n)$ is a good choice. 
These weights can also be trained from a set of images belonging the class of interest. 
Finally let us mention that we also proposed greedy algorithms when setting $\|\cdot\|_Y=\|\cdot\|_2$ in \cite{escande2015sparse}. 
In practice, it turns out that both approaches yield similar results.

We illustrate the importance of the thresholding strategy in section \ref{sec:num_threhsold}, Figure \ref{fig:deconv_compare_thresh}. 
\section{On the design of preconditioners}\label{sec:optimization}

Let $P\in \R^{N\times N}$ denote a Symmetric Positive Definite (SPD) matrix. There are two equivalent ways to understand preconditioning: one is based on a change of variable, while the other is based on a metric change. 

For the change of variable, let $z$ be defined by $x=P^{1/2}z$. A solution $x^*$ of problem \eqref{eq:l1problemThetaApprox} reads $x^*=P^{1/2}z^*$, where $z^*$ is a solution of:
\begin{equation}
\min_{z\in \R^N} \frac{1}{2}\|\Theta_K P^{1/2} z - x_0 \|_2^2 + \|P^{1/2} z\|_{1,w}. \label{eq:l1problemThetaApproxPrecond}
\end{equation}
The convergence rate of iterative methods applied to \eqref{eq:l1problemThetaApproxPrecond} is now driven by the properties of matrix $\Theta_K P^{1/2}$ instead of $\Theta_K$. 
By choosing $P$ adequately, one can expect to significantly accelerate convergence rates.
For instance, if $\Theta_K$ is invertible and $P^{1/2}=\Theta_K^{-1}$, one iteration of a proximal gradient descent provides the exact solution of the problem. 

For the metric change, the idea is to define a new scalar product defined by 
\begin{equation}
\langle x,y\rangle_P = \langle Px,y\rangle,
\end{equation}
and to consider the associated norm $\|x\|_P=\sqrt{\langle Px,x\rangle}$. 
By doing so, the gradient and proximal operators are modified, which leads to different dynamics. 
The preconditioned FISTA algorithm is given in Algorithm \ref{alg:FISTAP}.
\begin{algorithm}
\caption{Preconditioned accelerated proximal gradient descent}
\label{alg:FISTAP}
\begin{algorithmic}[1]
\State \textbf{input:} Initial guess $x^{(0)}=y^{(1)}$, $\tau=1/\|\Theta_K^*\Theta_K P^{-1}\|_2$ and $Nit$.
\For{$k=1$ to $Nit$}
\State Compute $\nabla F(y^{(k)}) = \Theta_K^*(\Theta_K y^{(k)} -u_0 )$. 
\State $x^{(k)} = \prox_{\tau G }^P \left( y^{(k)} - \tau P^{-1} \nabla F(y^{(k)}) \right)$.
\State  $y^{(k+1)} = x^{(k)} + \frac{k-1}{k+2} (x^{(k)} - x^{(k-1)})$. 
\EndFor
\end{algorithmic}
\end{algorithm}

In this algorithm
\begin{equation}\label{eq:defProxP}
 \prox_{\tau G}^P(z_0) = \argmin_{z\in \R^N} \frac{1}{2}\|z-z_0\|_P^2 + \tau G(z).
\end{equation}
Unfortunately, it is impossible to provide a closed-form expression of \eqref{eq:defProxP}, unless matrix $P$ has a very simple structure (e.g. diagonal). Finding an efficient preconditioner therefore requires: i) defining a structure for $P$ compatible with fast evaluations of the proximal operator \eqref{eq:defProxP} and ii) improving some ``properties'' of $\Theta_K P^{1/2}$ using this structure.


\subsection{What governs convergence rates?}

Good preconditioners are often heuristic. The following sentence is taken from a reference textbook about the resolution of linear systems by Y. Saad \cite{saad2003iterative}:
``Finding a good preconditioner to solve a given sparse linear system is often viewed as a combination of art and science. Theoretical results are rare and some methods work surpisingly well, often despite expectation.''
In what follows, we will first show that existing convergence results are indeed of little help. We then provide two simple diagonal preconditioners.

Let us look at the convergence rate of Algorithm \ref{alg:FISTA} applied to problem \eqref{eq:l1problemThetaApproxPrecond}. 
The following theorem appears in \cite{nesterov2013gradient,beck2009fast} for instance.
\begin{theorem}
Let $A=P^{-1/2}\Theta_K^*\Theta_K P^{-1/2}$ and set $L=\lambda_{\max}(A^*A)$. The iterates in Algorithm \ref{alg:FISTAP} satisfy:
\begin{equation}\label{eq:cvrate}
 E(x^{(k)}) - E(x^*) \leq L\|x-x_0\|_2^2 \cdot \min\left( \frac{1}{k^2} , \frac{1}{2}\left( \frac{\sqrt{\kappa(A) - 1}}{\sqrt{\kappa(A) + 1}} \right)^{2k} \right),
\end{equation}
where $\kappa(A)$ designs the condition number of $A$:
\begin{equation}
 \kappa(A) = \left\{\begin{array}{ll}
                     \sqrt{\frac{\lambda_{\max}(A^*A)}{\lambda_{\min}(A^*A)}} & \textrm{if } \lambda_{\min}(A^*A)>0, \\
                     +\infty & \textrm{otherwise}.
                    \end{array}\right.
\end{equation}
\end{theorem}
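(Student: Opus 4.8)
The plan is to recognize Algorithm~\ref{alg:FISTAP} as ordinary (Euclidean) FISTA run on a rescaled problem, and then to quote the convergence estimates of \cite{beck2009fast,nesterov2013gradient} for that rescaled problem.

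\emph{Reduction by change of variable.} First I would write $x = P^{-1/2}z$ and introduce
\begin{equation*}
\widetilde F(z) = \tfrac12\|\Theta_K P^{-1/2} z - x_0\|_2^2, \qquad \widetilde G(z) = \|P^{-1/2} z\|_{1,w}, \qquad \widetilde E = \widetilde F + \widetilde G,
\end{equation*}
so that $z\mapsto P^{-1/2}z$ is a bijection between minimizers of $\widetilde E$ and of \eqref{eq:l1problemThetaApprox}, with $\widetilde E(z) = E(P^{-1/2}z)$ for every $z$. The crux is to check that one step of Euclidean proximal-gradient on $\widetilde E$, namely $z^+ = \prox_{\tau\widetilde G}(z - \tau\nabla\widetilde F(z))$, is exactly line~4 of Algorithm~\ref{alg:FISTAP}, i.e. $x^+ = \prox^P_{\tau G}\bigl(x - \tau P^{-1}\nabla F(x)\bigr)$. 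This rests on the elementary identities $\nabla\widetilde F(z) = P^{-1/2}\nabla F(P^{-1/2}z)$ and $P^{-1/2}\prox_{\tau\widetilde G}(w) = \prox^{P}_{\tau G}(P^{-1/2}w)$, the latter obtained by substituting $z = P^{1/2}x$ inside the definition~\eqref{eq:defProxP}. Since the extrapolation step on line~5 is affine, it commutes with the change of variable, so the whole sequence produced by Algorithm~\ref{alg:FISTAP} satisfies $x^{(k)} = P^{-1/2}z^{(k)}$, where $(z^{(k)})$ is the FISTA sequence for $\widetilde E$; in particular $E(x^{(k)}) - E(x^*) = \widetilde E(z^{(k)}) - \widetilde E(z^*)$.

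\emph{Identification of the constants.} Next I would read off the smoothness data of $\widetilde F$: since $\nabla^2\widetilde F = P^{-1/2}\Theta_K^*\Theta_K P^{-1/2} = A$, the gradient $\nabla\widetilde F$ is Lipschitz with constant $\lambda_{\max}(A)$ and $\widetilde F$ is $\lambda_{\min}(A)$-strongly convex, so that $\kappa(A) = \sqrt{\lambda_{\max}(A)/\lambda_{\min}(A)}$ exactly when $\lambda_{\min}(A)>0$. The step size $\tau = 1/\|\Theta_K^*\Theta_K P^{-1}\|_2$ of Algorithm~\ref{alg:FISTAP} is admissible because $\Theta_K^*\Theta_K P^{-1} = P^{1/2} A P^{-1/2}$ is similar to $A$, whence $\|\Theta_K^*\Theta_K P^{-1}\|_2 \ge \lambda_{\max}(A)$ and $\tau \le 1/\lambda_{\max}(A)$ (with equality when $P$ commutes with $\Theta_K^*\Theta_K$, e.g. for the diagonal preconditioners considered later).

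\emph{Conclusion.} Finally I would invoke the two standard estimates for accelerated proximal-gradient methods applied to $\widetilde E$: the $O(1/k^2)$ bound of \cite{beck2009fast} and, when $\widetilde F$ is strongly convex, the linear bound of \cite{nesterov2013gradient} with contraction factor $\bigl(\tfrac{\sqrt{\kappa(A)}-1}{\sqrt{\kappa(A)}+1}\bigr)^{2}$ per iteration; taking the minimum of the two bounds and translating back through $x^{(k)} = P^{-1/2}z^{(k)}$ yields \eqref{eq:cvrate}. The only real work is the bookkeeping in the first step — verifying that the $P$-metric proximal step on line~4 coincides with a Euclidean proximal step for the rescaled problem, and keeping track of which operator ($P$ versus $P^{-1}$) gets conjugated so that the spectral quantities line up with $A = P^{-1/2}\Theta_K^*\Theta_K P^{-1/2}$; everything after that is a direct citation, so I would not reprove the FISTA rate itself.
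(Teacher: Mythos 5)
Your reduction is exactly the intended argument: the paper gives no proof of this theorem beyond citing \cite{nesterov2013gradient,beck2009fast}, and its own ``metric change'' description of preconditioning is precisely your change of variable $z=P^{1/2}x$, which turns Algorithm~\ref{alg:FISTAP} into Euclidean FISTA for $\widetilde E$ with Hessian $A=P^{-1/2}\Theta_K^*\Theta_K P^{-1/2}$, after which the rates are quoted from the references. One bookkeeping nit: since $A$ is symmetric positive semidefinite, the paper's $\kappa(A)=\sqrt{\lambda_{\max}(A^*A)/\lambda_{\min}(A^*A)}$ equals $\lambda_{\max}(A)/\lambda_{\min}(A)$ rather than $\sqrt{\lambda_{\max}(A)/\lambda_{\min}(A)}$ as you wrote, but this constant identification does not change the structure or validity of your argument.
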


When dealing with ill-posed inverse problems, the condition number $\kappa(A)$ is huge or infinite and bound \eqref{eq:cvrate} therefore reduces to 
\begin{equation}
E(x^{(k)}) - E(x^*) \leq \frac{L\|x-x_0\|_2^2}{k^2},
\end{equation}
even for a very large number of iterations.
Unfortunately, this bound tells very little about which properties of $A$ characterize the convergence rate. Only the largest singular value of $A$ seems to matter. The rest of the spectrum does not appear, while it obviously plays a key role. 

Recently, more subtle results were proposed in \cite{tao2015local} for the specific $\ell^1-\ell^2$ problem and in \cite{liang2015activity} for a broad class of problems. These theoretical results were shown to fit some experiments very well, contrarily to Theorem \ref{eq:cvrate}. Let us state a typical result.
\begin{theorem}\label{thm:jalal}
Assume that problem \eqref{eq:l1problemThetaApproxPrecond} admits a unique minimizer $x^*$.
Let $S^*=\mathrm{supp}(x^*)$ denote the solution's support. 
Then:
\begin{itemize}
 \item The sequence $(x^{(k)})_{k\in \N}$ converges to $x^*$.
 \item There exists an iteration number $k^*$ such that, for $k\geq k^*$, $\mathrm{supp}(x^{(k)})=\mathrm{supp}(x^*)$.
 \item If in addition 
 \begin{equation}\label{eq:conditionrestricted}
\langle A x, A x \rangle \geq \alpha \|x\|_2^2 , \ \forall x \textrm{ s.t. } \mathrm{supp}(x)\subseteq S^*,
 \end{equation}
then the sequence of iterates $(x^{(k)})_{k\in \N}$ converges linearly to $x^*$: there exists $0\leq \rho<1$ s.t. 
\begin{equation}
\|x^{(k)} - x^*\|_2 = O\left(\rho^k\right). 
\end{equation}
\end{itemize}
\end{theorem}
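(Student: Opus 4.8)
The plan is to combine the iterate–convergence theory of accelerated proximal gradient descent with the activity–identification analysis of \cite{tao2015local,liang2015activity}, which is tailored to exactly this $\ell^1-\ell^2$ setting; I would split the work along the three bullets. For the first, the objective $E=F_K+G$ is convex and coercive (the regularizer $G$, with positive weights, makes its sublevel sets bounded); since FISTA guarantees $E(x^{(k)})\to E(x^*)$, the sequence $(x^{(k)})$ is bounded and every cluster point minimizes $E$, hence equals $x^*$ by uniqueness, so $x^{(k)}\to x^*$. From the inertial update $y^{(k+1)}=x^{(k)}+\frac{k-1}{k+2}(x^{(k)}-x^{(k-1)})$ and $x^{(k)}-x^{(k-1)}\to 0$ I would then deduce $y^{(k)}\to x^*$ — the input needed for the next step.

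For the finite identification of the support, I would write the first-order optimality condition at $x^*$ in the metric induced by the preconditioner: the preconditioned negative gradient $g^*$ of $F_K$ at $x^*$ lies in $\partial G(x^*)$, so $g^*[i]=w[i]\,\mathrm{sign}(x^*[i])$ for $i\in S^*$ and $|g^*[i]|\le w[i]$ for $i\notin S^*$. Assuming the non-degeneracy (strict complementarity) property that the last inequality is \emph{strict} off the support — the standing hypothesis behind \cite{tao2015local,liang2015activity} — I would analyse one proximal step: $\prox_{\tau G}^P$ acts coordinatewise by soft-thresholding at level $\tau w[i]$ (for diagonal $P$, which is the case used in this paper, or in the appropriate reduced form otherwise) on a sequence converging to $x^*+\tau g^*$. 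For $i\in S^*$, coordinate $i$ of that limit has modulus $|x^*[i]|+\tau w[i]>\tau w[i]$, so the thresholded coordinate survives once $k$ is large; for $i\notin S^*$ it has modulus $\tau|g^*[i]|<\tau w[i]$, so the coordinate is set to zero once $k$ is large. Since the index set is finite, a single $k^*$ works beyond which $\mathrm{supp}(x^{(k)})=S^*$.

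For the local linear rate, I would use that the iterates stay in $V=\{x:\mathrm{supp}(x)\subseteq S^*\}$ for $k\ge k^*$ and that, on a neighbourhood of $x^*$, the signs on $S^*$ are locked, so $G$ agrees there with an affine function; the recursion then reduces to the accelerated gradient method applied to the smooth quadratic $\frac12\|\Theta_K x-x_0\|_2^2$ plus a linear term, restricted to $V$. Condition \eqref{eq:conditionrestricted} says precisely that the Hessian of this restricted quadratic is bounded below by $\alpha\,\mathrm{Id}$ in the Loewner order, while it is bounded above by $\|\Theta_K\|_2^2\,\mathrm{Id}$; the restricted problem is thus smooth and strongly convex, a regime on which the accelerated scheme converges linearly, giving $\|x^{(k)}-x^*\|_2=O(\rho^k)$ for some $\rho\in[0,1)$.

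The hard part is not a calculation but two structural points, which I would delegate to \cite{tao2015local,liang2015activity} rather than reprove: the iterate convergence of the \emph{classical} inertial scheme is delicate (the momentum $\frac{k-1}{k+2}$ sits at the edge of the range for which this is known, and the variant $\frac{k-1}{k+a}$ with $a>2$ is sometimes used to make the argument clean), and the identification step as sketched needs strict complementarity, which uniqueness of $x^*$ does not by itself imply — so a fully rigorous statement must either carry this as an added hypothesis or obtain it from a genericity argument on $\Theta_K$ and $x_0$.
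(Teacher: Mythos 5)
The paper does not actually prove Theorem \ref{thm:jalal}: it is stated as a ``typical result'' imported from \cite{tao2015local} (for the $\ell^1$-$\ell^2$ case) and \cite{liang2015activity}, so there is no internal proof to compare against. Your sketch follows the same route as those references --- iterate convergence, finite activity identification by analysing one preconditioned soft-thresholding step near $x^*$, then local linear convergence because after identification the iteration is an accelerated gradient method on the reduced problem, which is strongly convex by \eqref{eq:conditionrestricted} --- so in substance you are reconstructing the cited proofs rather than taking a different path, and your choice to delegate the two delicate structural points to those works is exactly what the paper does.

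Your caveats are genuine and worth recording. Finite identification does require the non-degeneracy (strict complementarity) condition $\abs{g^*[i]}<w[i]$ off the support; it is an explicit hypothesis in \cite{liang2015activity}, and uniqueness of $x^*$ does not imply it, so the theorem as transcribed in the paper is stated somewhat more generously than its sources warrant. Likewise, convergence of the iterates under the classical momentum $\frac{k-1}{k+2}$ is not covered by the standard theory; the paper concedes this in the remark following the theorem by appealing to the modified scheme of \cite{chambolle2014convergence}. Your alternative first-bullet argument (coercivity plus uniqueness plus $E(x^{(k)})\to E(x^*)$ forces all cluster points to equal $x^*$) is a clean workaround, but note it needs all weights $w[i]$ strictly positive (or $\Theta_K$ injective) for coercivity, which the paper's own experimental choice $w[i]=j(i)$ violates at the coarsest scale. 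Finally, for the third bullet you are right to lean on \cite{tao2015local}: with the vanishing-momentum FISTA sequence the local linear rate is not the textbook constant-momentum strong-convexity result, and proving it is precisely the contribution of that reference.
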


\begin{remark}
The uniqueness of a solution $x^*$ is not required if the algorithm converges. 
This can be ensured if the algorithm is slightly modified \cite{chambolle2014convergence}. 
\end{remark}

The main consequence of Theorem \eqref{thm:jalal} is that good preconditioners should depend on the support $S^*$ of the solution. Obviously, this support is unknown at the start of the algorithm. Moreover, for compact operators, condition \eqref{eq:conditionrestricted} is hardly satisfied. Therefore - once again - Theorem \eqref{thm:jalal} seems to be of little help to find well founded preconditioners.

In this paper we therefore restrict our attention to two standard preconditioners: Jacobi and Sparse Approximate Inverses (SPAI) \cite{grote1997parallel,saad2003iterative}. The overall idea is to cluster the eigenvalues of $A^*A$.

\subsection{Jacobi preconditioner}

The Jacobi preconditioner is one of the most popular diagonal preconditioner, it consists of setting 
\begin{equation}\label{eq:Jacobi_Precond}
P = \max(\diag(\Theta_K^* \Theta_K), \epsilon),
\end{equation}
where $\epsilon$ is a small constant. The parameter $\epsilon$ guarantees the invertibility of $P$. 

The idea of this preconditioner is to make the Hessian matrix $P^{-1/2}\Theta_K^*\Theta_K P^{-1/2}$ ``close'' to the identity.
This preconditioner has a simple analytic expression and is known to perform well for diagonally dominant matrices. 
Blurring matrices expressed in the wavelet domain have a fast decay away from the diagonal, but are usually not diagonally dominant.
Moreover, the parameter $\epsilon$ has to be hand-tuned. 



\subsection{SPAI preconditioner}

The preconditioned gradient in Algorithm \ref{alg:FISTAP} involves matrix $P^{-1}\Theta_K^*\Theta_K$.
The idea of sparse approximate inverses is to cluster the eigenvalues of $P^{-1}\Theta_K^*\Theta_K$ around $1$. 
To improve the clustering, a possibility is to solve the following optimization problem:
\begin{equation} \label{eq:opt_diag}
\argmin_{P, \text{ diagonal}} \norm{ \mathrm{Id} - P^{-1}\Theta_K^*\Theta_K }_F^2,
\end{equation}
where $\|\cdot\|_F$ denotes the Frobenius norm.
This formulation is standard in the numerical analysis community and known as sparse approximate inverse (SPAI) \cite{grote1997parallel,saad2003iterative}.

\begin{lemma}
	Let $M=\Theta_K^* \Theta_K$. The set of solutions of \eqref{eq:opt_diag} reads:
	\begin{equation}\label{eq:SPAI_Precond}
		P[i,i] = \left\{ \begin{array}{l}
				  \frac{M^2[i,i]}{M[i,i] } \quad \textrm{if } M[i,i]\neq 0, \\
				  \textrm{an arbitrary positive value otherwise}.
		                 \end{array}\right.
	\end{equation}
\end{lemma}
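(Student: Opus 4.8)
The plan is to observe that the Frobenius norm in \eqref{eq:opt_diag} decouples completely across rows, so the minimization reduces to $N$ independent scalar problems — one for each diagonal entry $P[i,i]$. First I would expand
\begin{equation}
\norm{ \mathrm{Id} - P^{-1} M }_F^2 = \sum_{i=1}^N \sum_{l=1}^N \left( \delta_{il} - \frac{M[i,l]}{P[i,i]} \right)^2,
\end{equation}
where $M = \Theta_K^* \Theta_K$ and $\delta_{il}$ is the Kronecker symbol. Since $P$ is diagonal, each term in the outer sum depends only on the single variable $p_i := P[i,i]$, so it suffices to minimize, for each fixed $i$,
\begin{equation}
g_i(p_i) = \left( 1 - \frac{M[i,i]}{p_i} \right)^2 + \sum_{l \neq i} \frac{M[i,l]^2}{p_i^2}.
\end{equation}

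Next I would handle the two cases. If $M[i,i] = 0$, then (using that $M$ is symmetric positive semidefinite, hence a zero diagonal entry forces the whole $i$-th row to vanish) we get $M[i,l] = 0$ for all $l$, so $g_i(p_i) = 1$ for every $p_i > 0$; any positive value is optimal, which is the second branch of \eqref{eq:SPAI_Precond}. If $M[i,i] \neq 0$, I would write $t = 1/p_i$ and note that $g_i$ becomes a quadratic in $t$,
\begin{equation}
g_i = t^2 \sum_{l} M[i,l]^2 - 2 t\, M[i,i] + 1 = t^2\, M^2[i,i] - 2t\, M[i,i] + 1,
\end{equation}
where I used $\sum_l M[i,l]^2 = (M M)[i,i] = M^2[i,i]$ by symmetry of $M$. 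This quadratic in $t$ is strictly convex (its leading coefficient $M^2[i,i]$ is strictly positive whenever the $i$-th row of $M$ is nonzero, which holds since $M[i,i]\neq 0$), so it has the unique minimizer $t^* = M[i,i]/M^2[i,i]$, i.e. $p_i = 1/t^* = M^2[i,i]/M[i,i]$. It remains to check $p_i > 0$: this follows because $M$ is positive semidefinite, so $M[i,i] \geq 0$, and $M^2[i,i] = \sum_l M[i,l]^2 > 0$; hence $p_i > 0$ and $P$ is a legitimate SPD matrix. Assembling the two branches over all $i$ gives exactly the claimed set of solutions \eqref{eq:SPAI_Precond}.

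I do not anticipate a genuine obstacle here; the only point requiring a moment of care is the bookkeeping identity $\sum_l M[i,l]^2 = M^2[i,i]$, which relies on the symmetry $M = M^\top$ (true since $M = \Theta_K^*\Theta_K$), and the observation that a zero diagonal entry of a PSD matrix forces the corresponding row to be zero — this is what makes the $M[i,i]=0$ branch a true degeneracy (the objective is constant in $p_i$) rather than something needing a limiting argument.
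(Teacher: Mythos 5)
Your proof is correct, and it takes a somewhat different route from the paper's. You expand $\norm{\mathrm{Id}-P^{-1}M}_F^2$ entrywise, observe that it decouples row by row into scalar problems in $p_i=P[i,i]$, substitute $t=1/p_i$ to get a strictly convex quadratic $t^2 M^2[i,i]-2tM[i,i]+1$ (using symmetry of $M$ for $\sum_l M[i,l]^2=M^2[i,i]$), and solve each one directly, treating the degenerate case via the fact that a vanishing diagonal entry of the PSD matrix $M=\Theta_K^*\Theta_K$ kills the whole row. The paper instead first transposes the objective to $\norm{\mathrm{Id}-MP^{-1}}_F^2$ and writes Karush--Kuhn--Tucker stationarity with a Lagrange multiplier supported on the off-diagonal entries, reading off the diagonal equations $M^2[i,i]\,P^{-1}[i,i]=M[i,i]$ and handling the case $M^2[i,i]=0$ separately; the two case distinctions are equivalent since $M[i,i]=0$ iff $M^2[i,i]=0$ for a PSD matrix. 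Your elementary argument buys a little more: it shows directly that the stationary point is the \emph{unique} global minimizer in each non-degenerate coordinate (strict convexity in $t$), that the objective is genuinely constant in the degenerate coordinates (so the ``arbitrary positive value'' branch really describes the full solution set), and that the resulting $P$ is positive definite, whereas the paper's KKT computation only records first-order conditions and leaves sufficiency implicit; the paper's version, in exchange, is shorter and uses standard machinery. Both proofs rest on the same underlying decoupling made possible by $P$ being diagonal.
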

\begin{proof}
First notice that problem \eqref{eq:opt_diag} can be rewritten as 
\begin{equation} \label{eq:opt_diag2}
\argmin_{P, \text{ diagonal}} \norm{ \mathrm{Id} - M P^{-1}}_F^2,
\end{equation}
by taking the transpose of the matrices, since $M$ is symmetric and $P$ diagonal.
The Karush-Kuhn-Tucker optimality conditions for problem \eqref{eq:opt_diag2} yield the existence of a Lagrange multiplier $\mu\in \R^{N\times N}$ such that:
\begin{equation}
 M(MP^{-1}-I) + \mu =0,
\end{equation}
with 
\begin{equation}
 \mu[i,j]=\left\{
\begin{array}{l}
 0 \quad \textrm{if } i= j,\\
 \textrm{an arbitrary value otherwise}.           
 \end{array} \right.
\end{equation}
Therefore, for all $i$,
\begin{equation}
 (M^2 P^{-1})[i,i] = M[i,i],
\end{equation}
which can be rewritten as 
\begin{equation}
 M^2[i,i] P^{-1}[i,i] = M[i,i],
\end{equation}
since $P$ is diagonal. If $M^2[i,i]=0$, then $M[i,i]=0$ since $M^2[i,i]$ is the squared norm of the $i$-th column of $M$. In that case, $P^{-1}[i,i]$ can take an arbitrary value. Otherwise $P^{-1}[i,i]= M[i,i] /M^2[i,i]$, finishing the proof.
\end{proof}

\section{Numerical experiments}\label{sec:numerical}

In this section we propose a set of numerical experiments to illustrate the proposed methodology and to compare its efficiency with respect to state-of-the-art approaches. 
The numerical experiments are performed on two $1024 \times 1024$ images with values rescaled in $[0,1]$, see Figure \ref{fig:original}. 
We also consider two different blurs, see Figure \ref{fig:PSFs}. 
The PSF in Figure \ref{fig:PSF_anisotiop_gauss} is an anisotropic 2D Gaussian with kernel defined for all $(t_1,t_2) \in [0,1]^2$ by 
\[
	k(t_1,t_2) = \left\{ \begin{array}{ll}
	                      \exp{ \left( - \frac{t_1^2}{2 \sigma^2} - \frac{t_2^2}{2 \sigma^2} \right)}  & \textrm{if } t_1\geq 0, \\
	                      \exp{ \left( - \frac{4t_1^2}{2 \sigma^2} - \frac{t_2^2}{2 \sigma^2} \right)}  & \textrm{otherwise},\\
	                     \end{array}\right.
\]
with $\sigma = 5$. This PSF is smooth, which is a favorable situation for our method, see Theorem \ref{thm:proof_thresh}.
The PSF in Figure \ref{fig:PSF_movement} is a simulation of motion blur. This PSF is probably one of the worst for the proposed technique since it is singular.
The PSF is generated from a set of $l = 5$ points drawn at random from a Gaussian distribution with standard deviation $\sigma_1 = 8$. 
Then the points are joined using a cubic spline and the resulting curve is blurred using a Gaussian kernel of standard deviation $\sigma_2=1$.

\begin{figure}[htp]
	\centering
	\begin{subfigure}[b]{0.35\textwidth}
		\includegraphics[width=\textwidth, trim=450 450 450 450, clip=true]{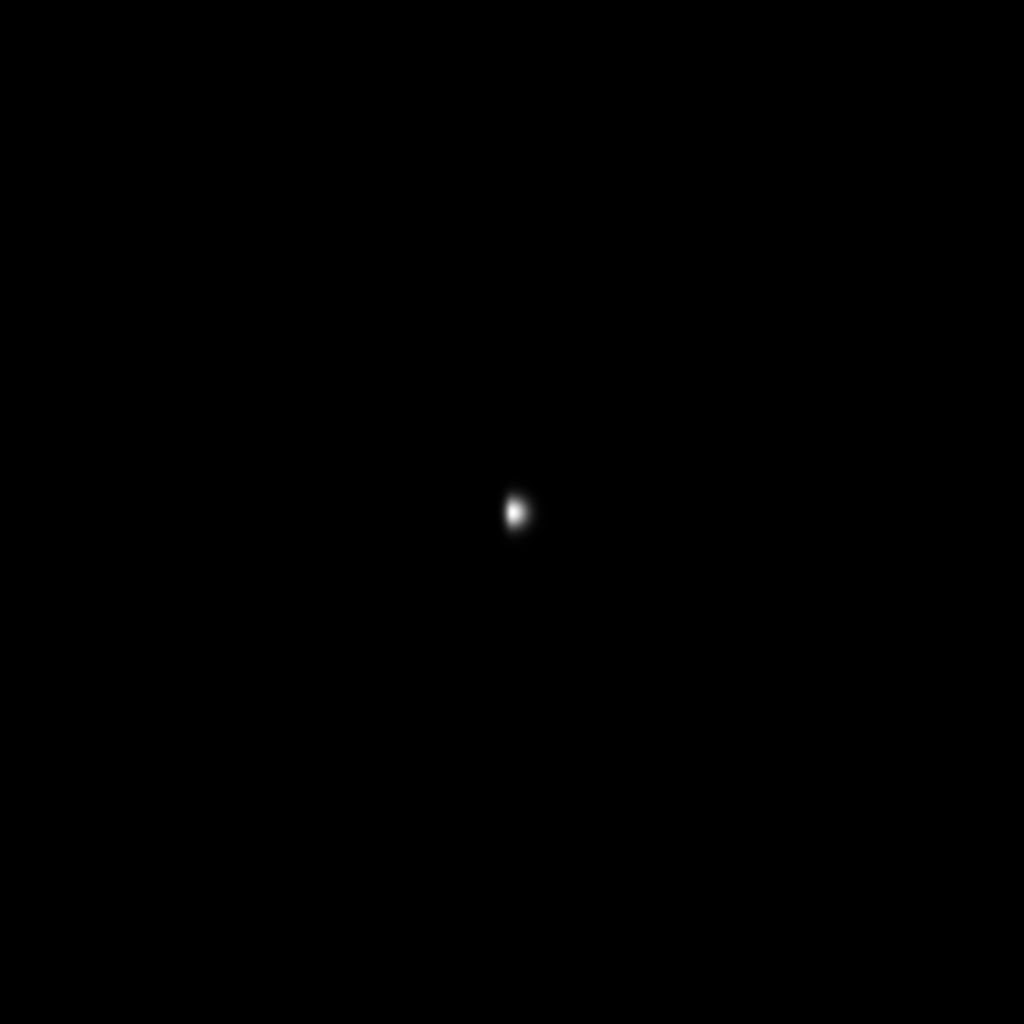}
		\caption{}\label{fig:PSF_anisotiop_gauss}
	\end{subfigure}
	\begin{subfigure}[b]{0.35\textwidth}
	\includegraphics[width=\textwidth, trim=450 450 450 450, clip=true]{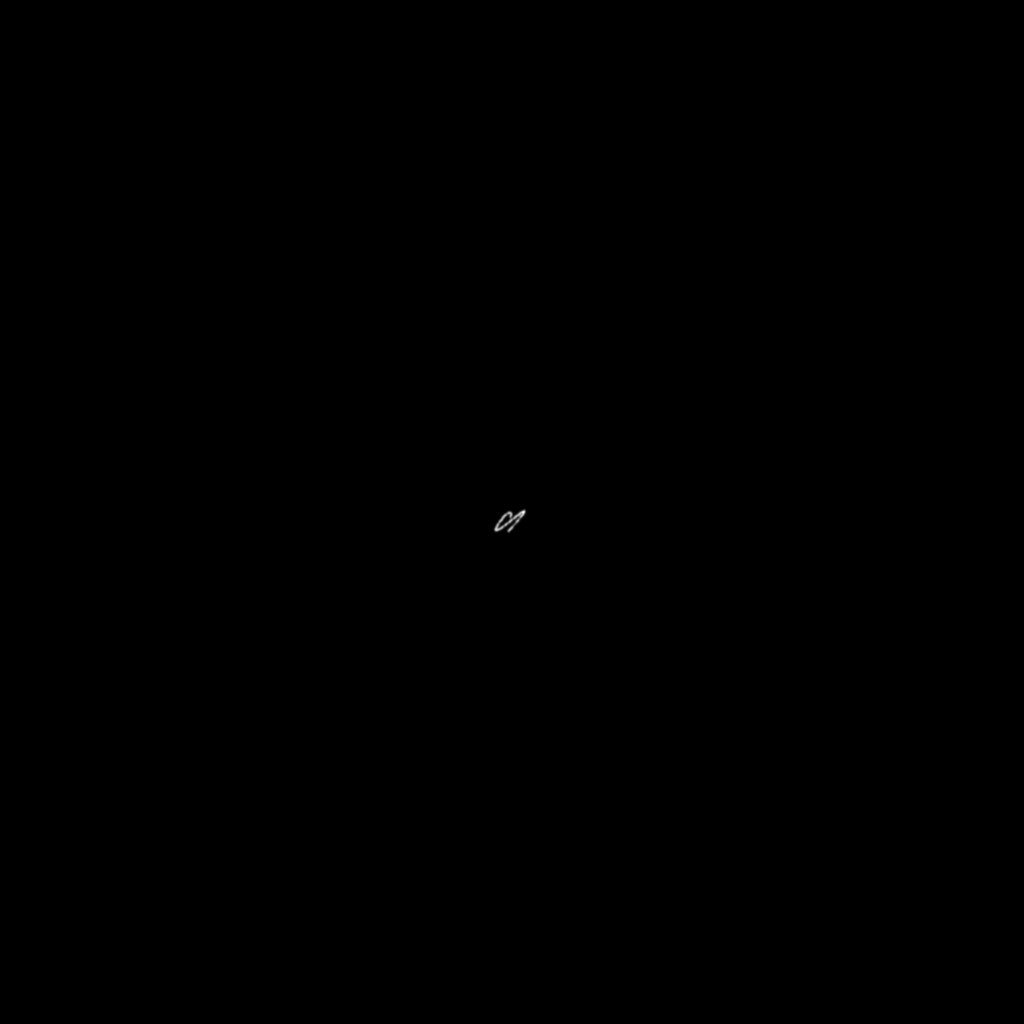}
	\caption{} \label{fig:PSF_movement}
	\end{subfigure}
	\caption{PSFs used in the paper. \eqref{fig:PSF_anisotiop_gauss} is a skewed Gaussian. \eqref{fig:PSF_movement} is a motion blur.} \label{fig:PSFs}
\end{figure}

All our numerical experiments are based on Symmlet 6 wavelets decomposed $J=6$ times. This choice offers a good compromise between computing times and visual quality of the results. The weights $w$ in function $G$ in \eqref{def:G} were defined by $w[i]=j(i)$, where $j(i)$ denotes the scale of the $i$-th wavelet coefficient. This choice was hand tuned so as to produce the best deblurring results.
The numerical experiments were performed on Matlab2014b on an Intel(R) Xeon(R) CPU E5-2680 v2 @ 2.80GHz with 200Gb RAM. Multicore was disabled by launching Matlab with:
\begin{lstlisting}[language=matlab]
>>  matlab -singleCompThread
\end{lstlisting}
For the experiments led on GPU, we use a NVIDIA Tesla K20c containing 2496 CUDA cores and 5GB internal memory.

Figures \ref{fig:deconv_book} and \ref{fig:deconv_confocal} display two typical deconvolution results using this approach.

\begin{figure}\centering
\begin{subfigure}[b]{0.9\textwidth}
\begin{tikzpicture}[zoomboxarray]
    \node [image node] { \includegraphics[width=0.45\textwidth]{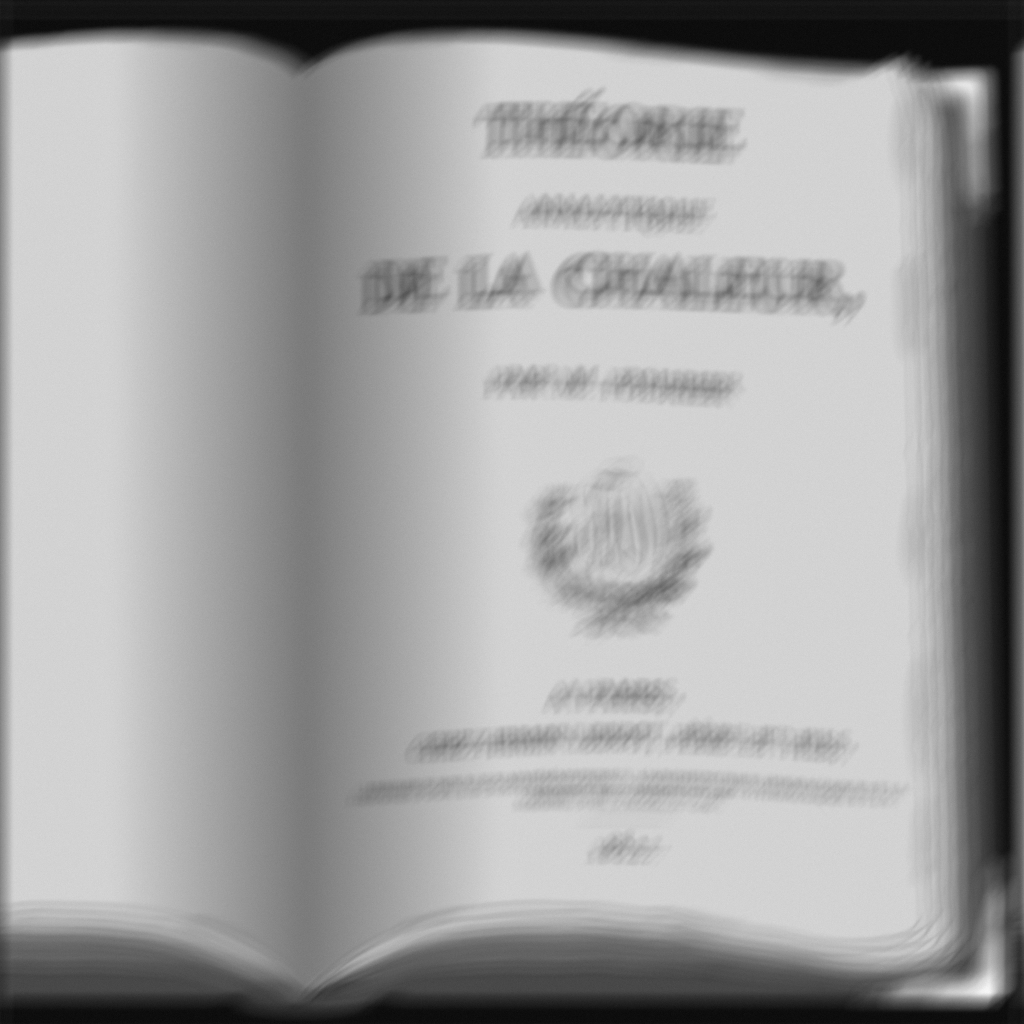} };
    \zoombox[color code=red]{0.905,0.1}
    \zoombox[magnification=2]{0.6,0.47}
    \zoombox[color code=blue,magnification=2]{0.55,0.8}
	\zoombox[color code=green,magnification=2]{0.6,0.15}
\end{tikzpicture}
\end{subfigure}

\begin{subfigure}[b]{0.9\textwidth}
\begin{tikzpicture}[zoomboxarray]
    \node [image node] { \includegraphics[width=0.45\textwidth]{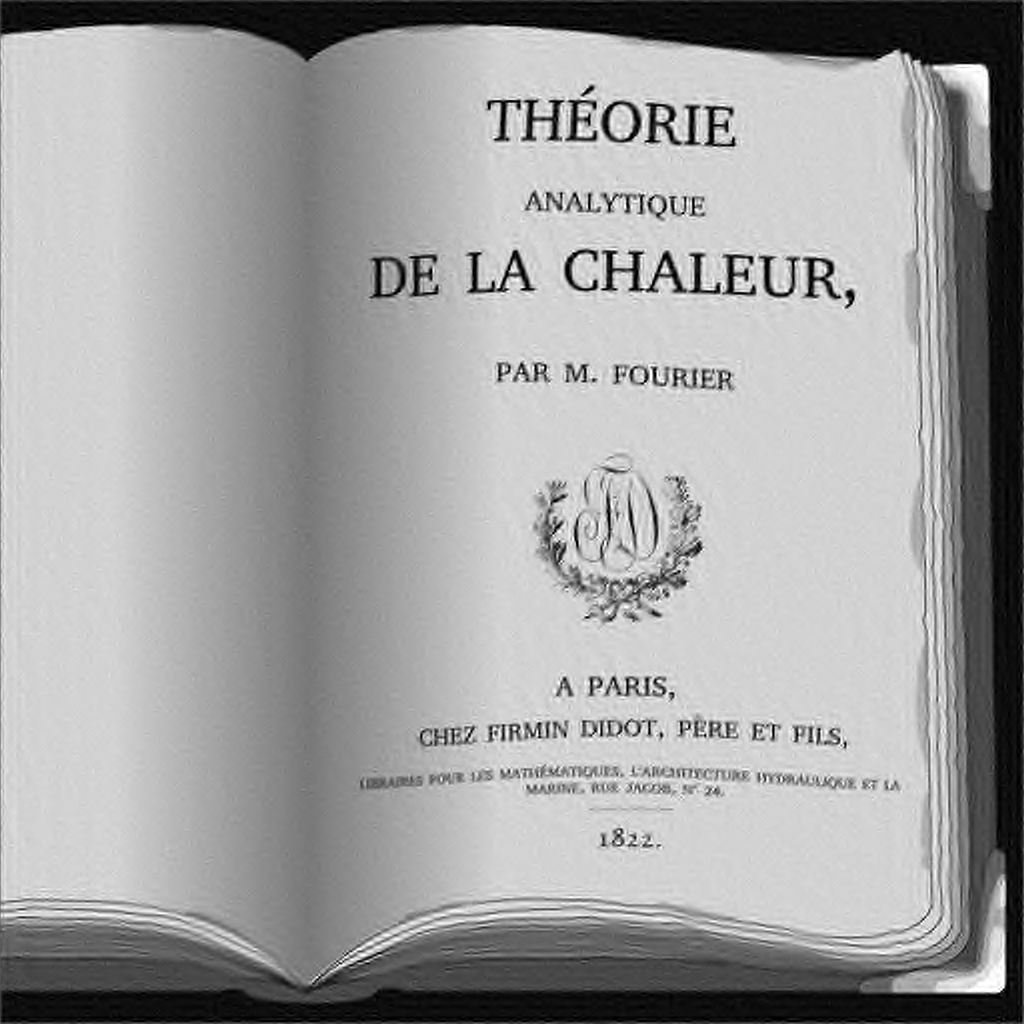} };
    \zoombox[color code=red]{0.905,0.1}
    \zoombox[magnification=2]{0.6,0.47}
    \zoombox[color code=blue,magnification=2]{0.55,0.8}
	\zoombox[color code=green,magnification=2]{0.6,0.15}
\end{tikzpicture}
\end{subfigure}
\caption{A deconvolution example. The book image in Figure \ref{fig:original} is blurred with the motion blur Figure \ref{fig:PSF_movement} and degraded with a noise level of $5.10^{-3}$. The pSNR of the degraded image (on top) is 17.85dB.  Problem \eqref{eq:l1problem} is solved using the exact operator, $\lambda = 10^{-4}$, 500 iterations and Symmlet 6 wavelets decomposed 6 times. The pSNR of the restored image is 24.14dB.} \label{fig:deconv_book}
\end{figure}


\begin{figure}\centering
\begin{subfigure}[b]{0.9\textwidth}
\begin{tikzpicture}[zoomboxarray]
    \node [image node] { \includegraphics[width=0.45\textwidth]{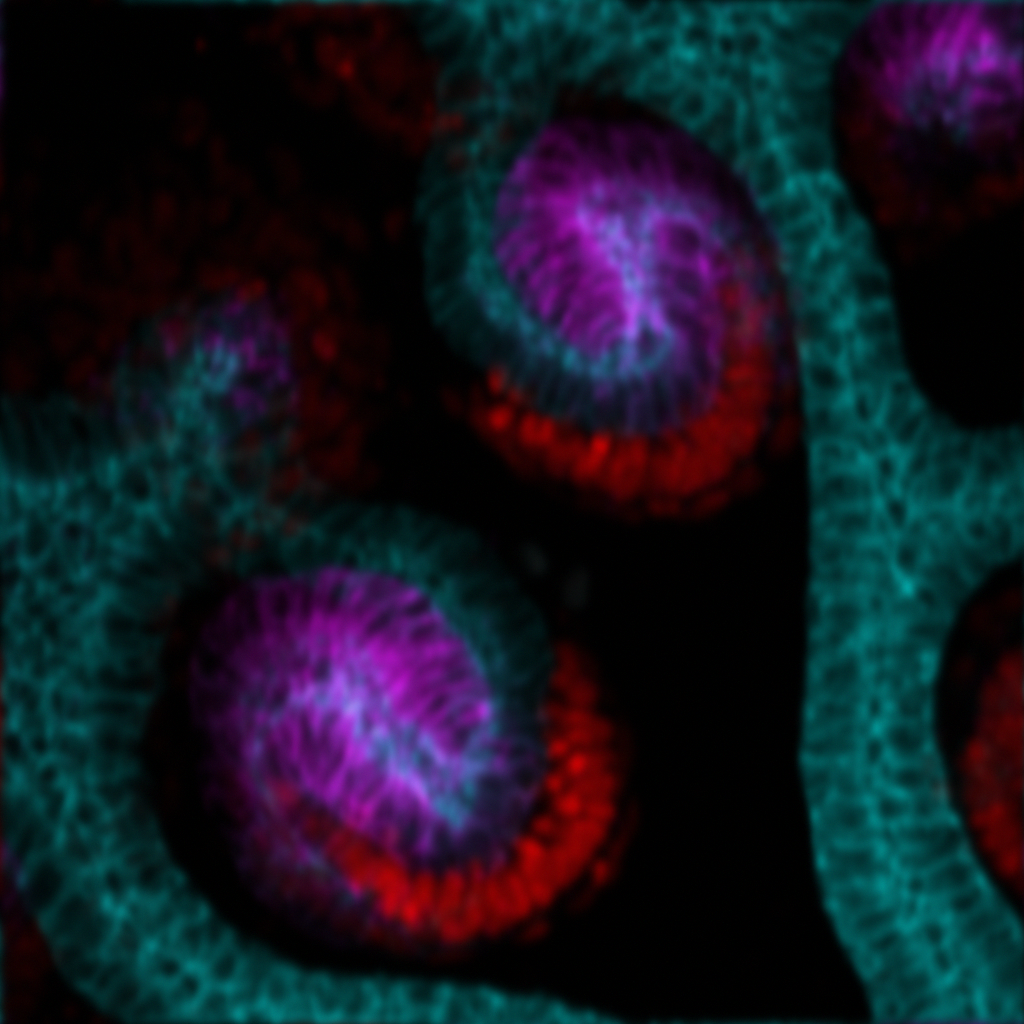} };
    \zoombox[color code=red]{0.9,0.1}
    \zoombox[magnification=2]{0.875,0.5}
    \zoombox[color code=blue,magnification=2]{0.5,0.8}
	\zoombox[color code=green,magnification=2]{0.4,0.2}
\end{tikzpicture}
\end{subfigure}

\begin{subfigure}[b]{0.9\textwidth}
\begin{tikzpicture}[zoomboxarray]
    \node [image node] { \includegraphics[width=0.45\textwidth]{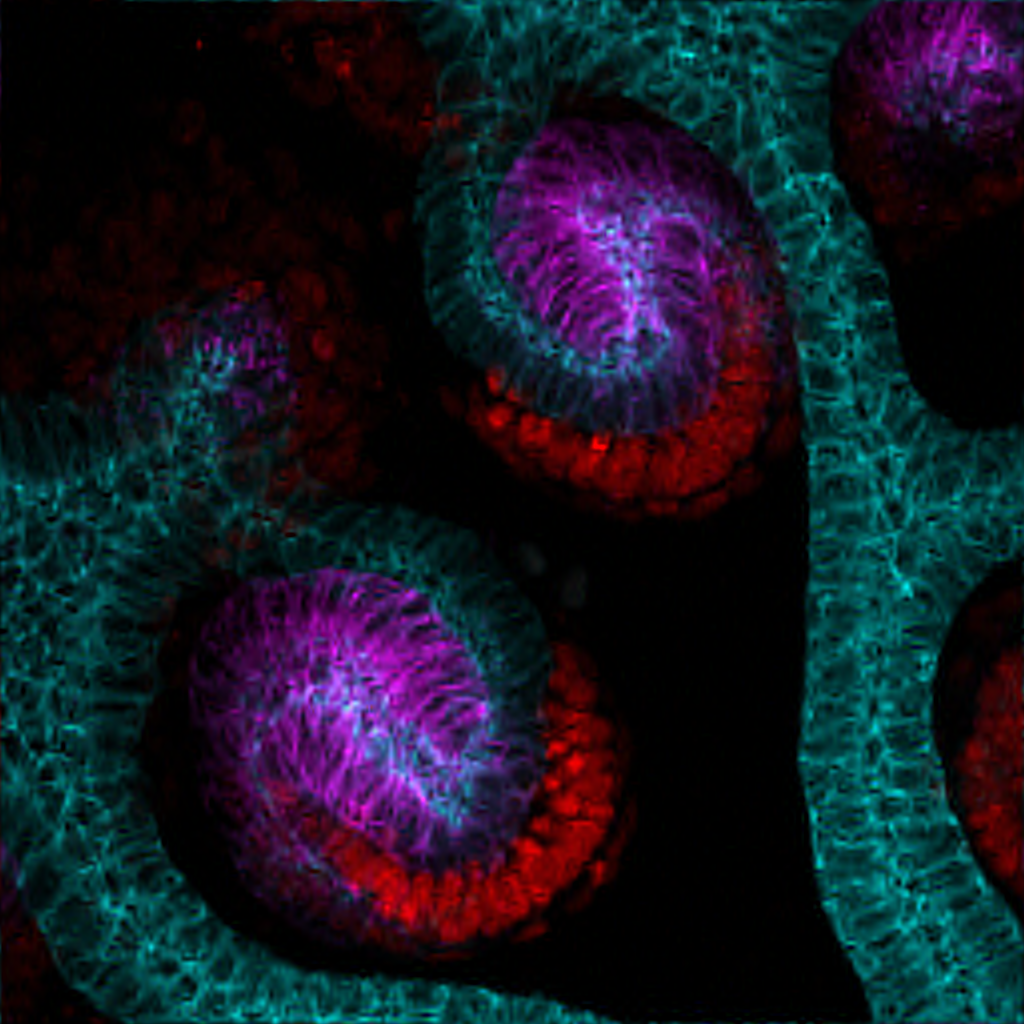} };
    \zoombox[color code=red]{0.9,0.1}
    \zoombox[magnification=2]{0.875,0.5}
    \zoombox[color code=blue,magnification=2]{0.5,0.8}
	\zoombox[color code=green,magnification=2]{0.4,0.2}
\end{tikzpicture}
\end{subfigure}
\caption{A deconvolution example. The confocal image Figure \ref{fig:original} has been blurred with blur Figure \ref{fig:PSF_anisotiop_gauss} and degraded with a noise level of $5.10^{-3}$. The pSNR of the degraded image (on top) is 23.94dB.  Problem \eqref{eq:l1problem} is solved using the exact operator, $\lambda = 10^{-4}$, 500 iterations and Symmlet 6 wavelets decomposed 6 times. The pSNR of the restored image is 26.33dB.} \label{fig:deconv_confocal} 
\end{figure}

\subsection{On the role of thresholding strategies} \label{sec:num_threhsold}

We first illustrate the influence of the thresholding strategy discussed in Section \ref{subsec:thresholdstrategies}. We construct two matrices having the same number of coefficients but built using two different thresholding rules: the naive thresholding given in equation \eqref{eq:naivethresholding} and the weighted thresholding given in equation \eqref{eq:cleverthreshold}.
Figure \ref{fig:deconv_compare_thresh} displays the images restored with each of these two matrices. 
It is clear that the weighted thresholding strategy significantly outperforms the simple one: it produces less artefacts and a higher pSNR. 
In all the following experiments, this thresholding scheme will be used.

\begin{figure}\centering
\begin{subfigure}[b]{0.9\textwidth}
\begin{tikzpicture}[zoomboxarray]
    \node [image node] { \includegraphics[width=0.45\textwidth]{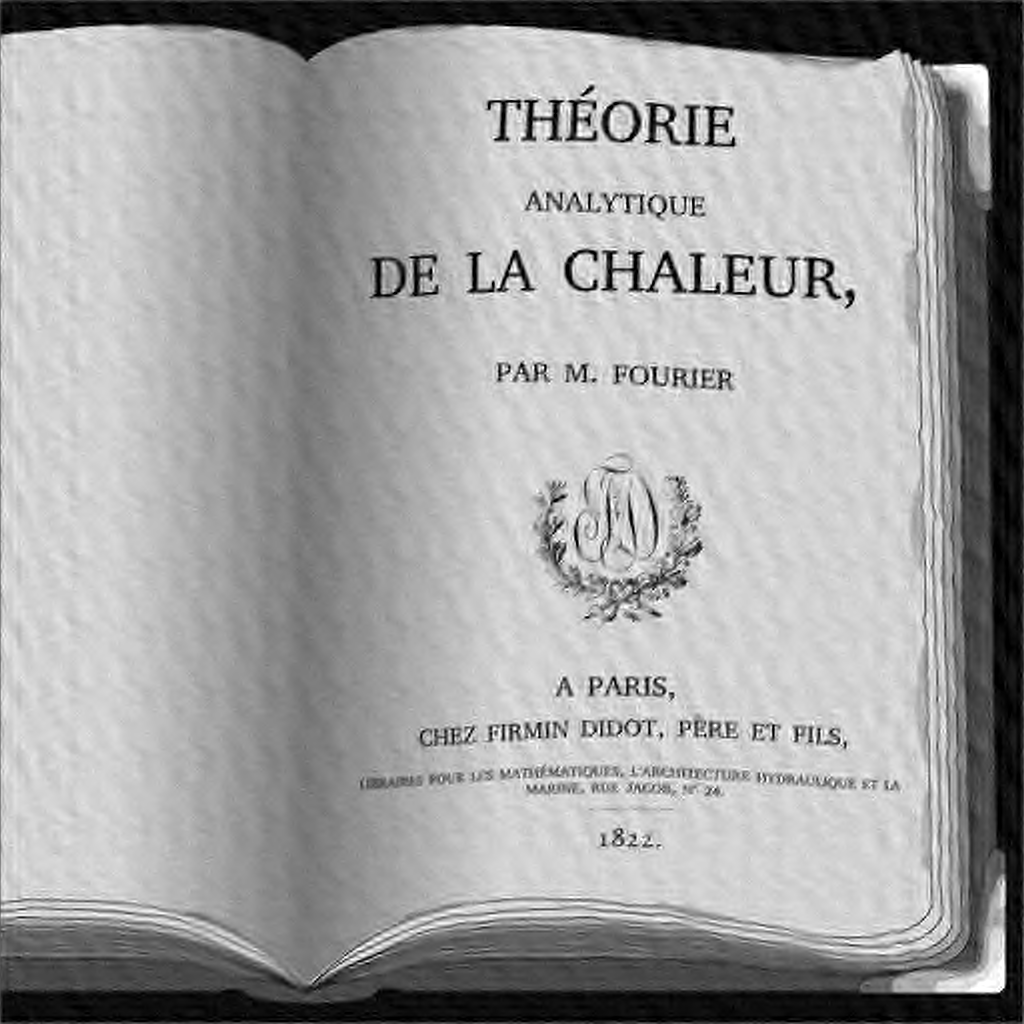} };
    \zoombox[color code=red]{0.905,0.1}
    \zoombox[magnification=2]{0.6,0.47}
    \zoombox[color code=blue,magnification=2]{0.55,0.8}
	\zoombox[color code=green,magnification=2]{0.6,0.15}
\end{tikzpicture}
\end{subfigure}

\begin{subfigure}[b]{0.9\textwidth}
\begin{tikzpicture}[zoomboxarray]
    \node [image node] { \includegraphics[width=0.45\textwidth]{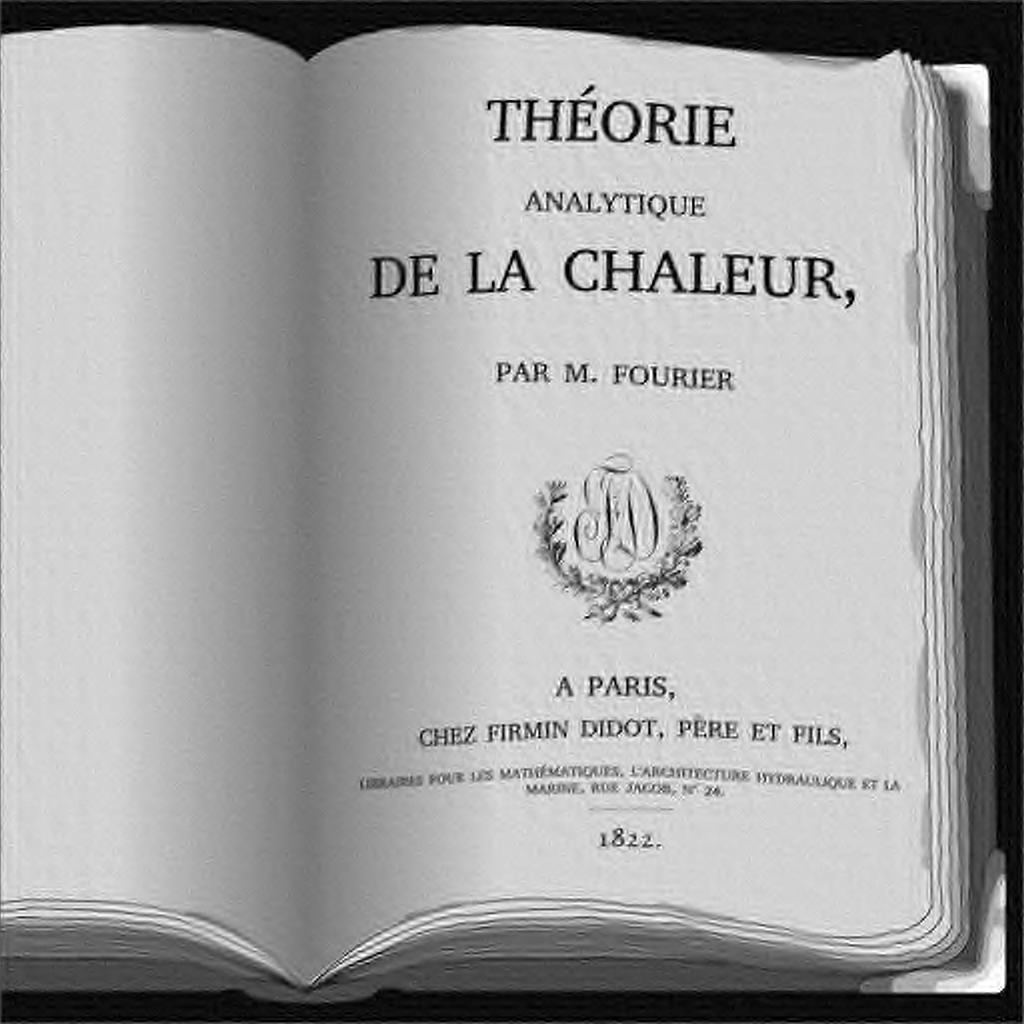} };
    \zoombox[color code=red]{0.905,0.1}
    \zoombox[magnification=2]{0.6,0.47}
    \zoombox[color code=blue,magnification=2]{0.55,0.8}
	\zoombox[color code=green,magnification=2]{0.6,0.15}
\end{tikzpicture}
\end{subfigure}
\caption{A deconvolution example showing the importance of the thresholding strategy. The book image on Figure \ref{fig:original} is blurred with the kernel in Figure \ref{fig:PSF_movement} and degraded with a noise level of $5.10^{-3}$ (see Figure \ref{fig:deconv_book}). Matrices have been constructed with the same number of coefficients that corresponds to $57$ operations per pixel. Top: the result for the simple thresholding strategy, pSNR = 23.71dB. Bottom: the weighted strategy pSNR = 24.07dB.} \label{fig:deconv_compare_thresh}
\end{figure}

\subsection{Approximation in wavelet bases}

In this paragraph, we illustrate the influence of the approximation on the deblurring quality. 
We compare the solution of the original problem \eqref{eq:l1problem} with the solution of the approximated problem \eqref{eq:l1problemThetaApprox} for different numbers of coefficients $K$. 
Computing the exact gradient $\nabla F = \Psi^*H^*H\Psi$ requires two fast Fourier transforms, two fast wavelet transforms and a multiplication by a diagonal matrix. Its complexity is therefore: $2 N \log_2(N) + 2lN + N$, with $l$ denoting the wavelet filter size. The number of operations per pixel is therefore $2 \log_2(N) + 2l + 1$.
The approximate gradient $\nabla F_K$ requires two matrix-vector products with a $K$-sparse matrix. Its complexity is therefore $2\frac{K}{N}$ operations per pixel.
Figure \eqref{fig:pSNR_vs_nnz} displays the restoration quality with respect to the number of operations per pixel.

For the smooth PSF in Figure \ref{fig:PSF_anisotiop_gauss}, the standard approach requires $89$ operations per pixel, while the wavelet method requires $20$ operations per pixel to obtain the same pSNR. This represents an acceleration of a factor $4.5$. For users ready to accept a decrease of pSNR of $0.2$dB, $K$ can be chosen even significantly lower, leading to an acceleration factor of $40$ and around $2.2$ operations per pixels! For the less regular PSF \ref{fig:PSF_movement}, the gain is less important. 
To obtain a similar pSNR, the proposed approach is in fact slower with $138$ operations per pixel instead of $89$ for the standard approach. 
However, accepting a decrease of pSNR of $0.2$dB, our method leads to an acceleration by a factor $1.1$. 
To summarize, the proposed approximation does not really lead to interesting acceleration factors for motion blurs.
Note however that the preconditioners can be used even if the operator is not expanded in the blur domain. 

The different behavior between the two blurs was predicted by Theorem \ref{thm:proof_thresh}, since the compressibility of operators in wavelet bases strongly depends on the regularity $M$ of the PSF. 

\begin{figure}[htp]
	\centering
	\begin{subfigure}[b]{0.45\textwidth}
%
%
\begin{tikzpicture}

\begin{axis}[%
width=0.8\textwidth,
height=0.7\textwidth,
at={(0\textwidth,0\textwidth)},
scale only axis,
separate axis lines,
every outer x axis line/.append style={black},
every x tick label/.append style={font=\color{black}\tiny},
xmode=log,
xmin=0.1,
xmax=1000,
xlabel=\# of ops per pixel - Skewed Gaussian blur,
xlabel style={font=\tiny},
xminorticks=true,
every outer y axis line/.append style={black},
every y tick label/.append style={font=\color{black}\tiny},
ymin=25,
ymax=30,
ylabel=pSNR,
ylabel style={font=\tiny},
axis background/.style={fill=white},
legend style={legend cell align=left,align=left, at={(1.2,0.3)}, fill=white, font=\tiny}
]
\addplot [color=blue,solid,mark=x,mark options={solid}]
  table[row sep=crcr]{%
219.90087890625	29.1338958020102\\
71.54345703125	29.1288332943757\\
31.2001953125	29.1503601871284\\
20.94482421875	29.1271191526567\\
6.0673828125	28.9819568393334\\
3.544921875	28.889875732826\\
0.82666015625	27.8808798613276\\
0.59423828125	26.1083460720416\\
};
\addlegendentry{Wavelet method};

\addplot [color=red,dashed,line width=2.0pt]
  table[row sep=crcr]{%
0.01	29.1334684462794\\
1000	29.1334684462794\\
};
\addlegendentry{pSNR using exact operator};

\addplot [color=gray,dashed,line width=2.0pt]
  table[row sep=crcr]{%
89	0\\
89	30\\
};
\addlegendentry{\# for Fourier};

\end{axis}
\end{tikzpicture}%
	\end{subfigure}\qquad 
	\begin{subfigure}[b]{0.45\textwidth}
%
%
\begin{tikzpicture}

\begin{axis}[%
width=0.8\textwidth,
height=0.7\textwidth,
at={(0\textwidth,0\textwidth)},
scale only axis,
separate axis lines,
every outer x axis line/.append style={black},
every x tick label/.append style={font=\color{black}\tiny},
xmode=log,
xmin=0.1,
xmax=10000,
xlabel=\# of ops per pixel - Motion blur,
xlabel style={font=\tiny},
xminorticks=true,
every outer y axis line/.append style={black},
every y tick label/.append style={font=\color{black}\tiny},
ymin=10,
ymax=27.5,
ylabel=pSNR,
ylabel style={font=\tiny},
axis background/.style={fill=white},
legend style={legend cell align=left,align=left, at={(1.,0.3)}, fill=white, font=\tiny}
]

\addplot [color=blue,solid,mark=x,mark options={solid}]
  table[row sep=crcr]{%
1562.4970703125	24.1448240521283\\
456.66845703125	24.143332771936\\
138.4453125	24.104580891653\\
79.43798828125	23.9824623874007\\
18.41455078125	22.7896973992266\\
7.94921875	21.6663514096869\\
1.1162109375	13.9542006177716\\
};
\addlegendentry{Wavelet method};

\addplot [color=red,dashed,line width=2.0pt]
  table[row sep=crcr]{%
0.1	24.1450599395633\\
10000	24.1450599395633\\
};
\addlegendentry{pSNR using exact operator};

\addplot [color=gray,dashed,line width=2.0pt]
  table[row sep=crcr]{%
89	0\\
89	30\\
};
\addlegendentry{\# for Fourier};

\end{axis}
\end{tikzpicture}%
	\end{subfigure}
	\caption{Evolution of the pSNR of the deconvolved image w.r.t. the number of operations per pixel per iteration. The grey vertical line gives the number of operations per pixel per iteration to solve the exact $\ell^1$-problem \ref{eq:l1problem} using FFTs and FWTs. The horizontal line gives the pSNR obtained using the exact operator.} \label{fig:pSNR_vs_nnz}
\end{figure}
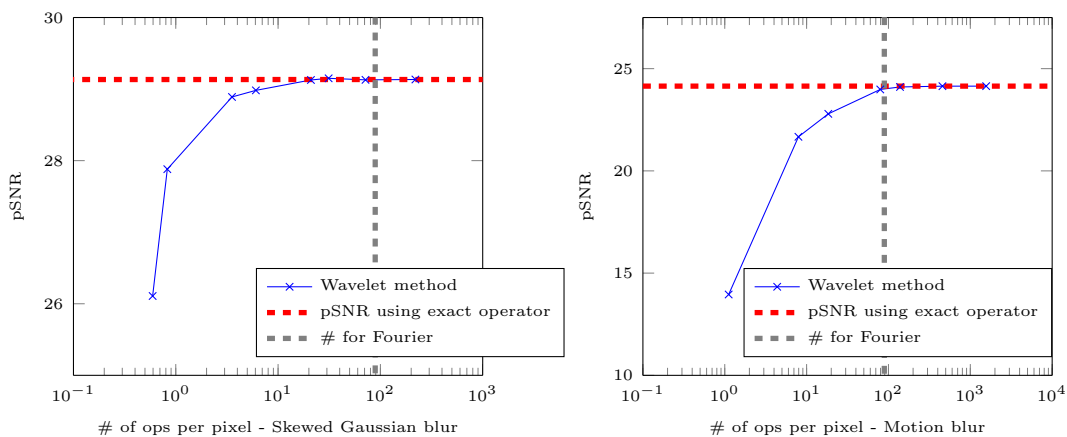

\begin{figure}\centering
\begin{subfigure}[b]{0.9\textwidth}
\begin{tikzpicture}[zoomboxarray]
    \node [image node] { \includegraphics[width=0.45\textwidth]{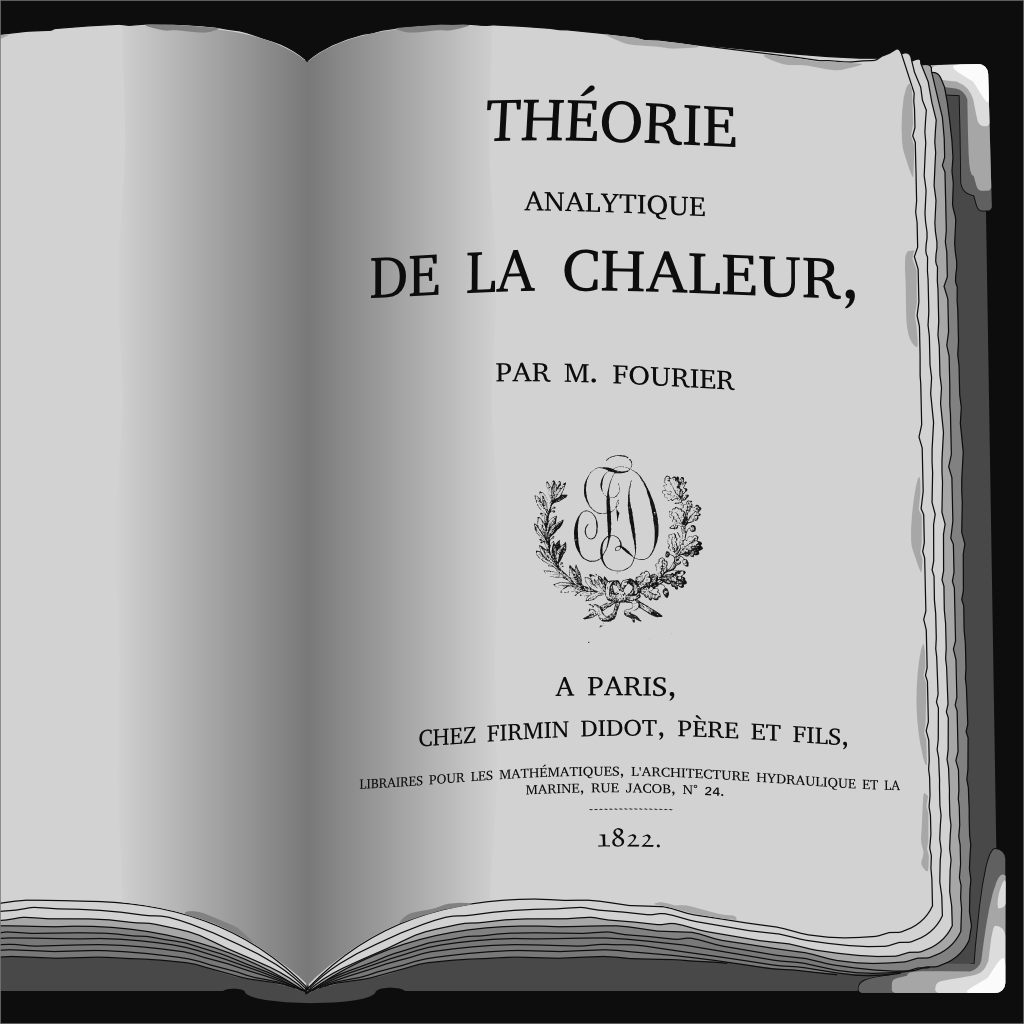} };
    \zoombox[color code=red]{0.905,0.1}
    \zoombox[magnification=2]{0.6,0.47}
    \zoombox[color code=blue,magnification=2]{0.55,0.8}
	\zoombox[color code=green,magnification=2]{0.6,0.15}
\end{tikzpicture}
\end{subfigure}

\begin{subfigure}[b]{0.9\textwidth}
\begin{tikzpicture}[zoomboxarray]
    \node [image node] { \includegraphics[width=0.45\textwidth]{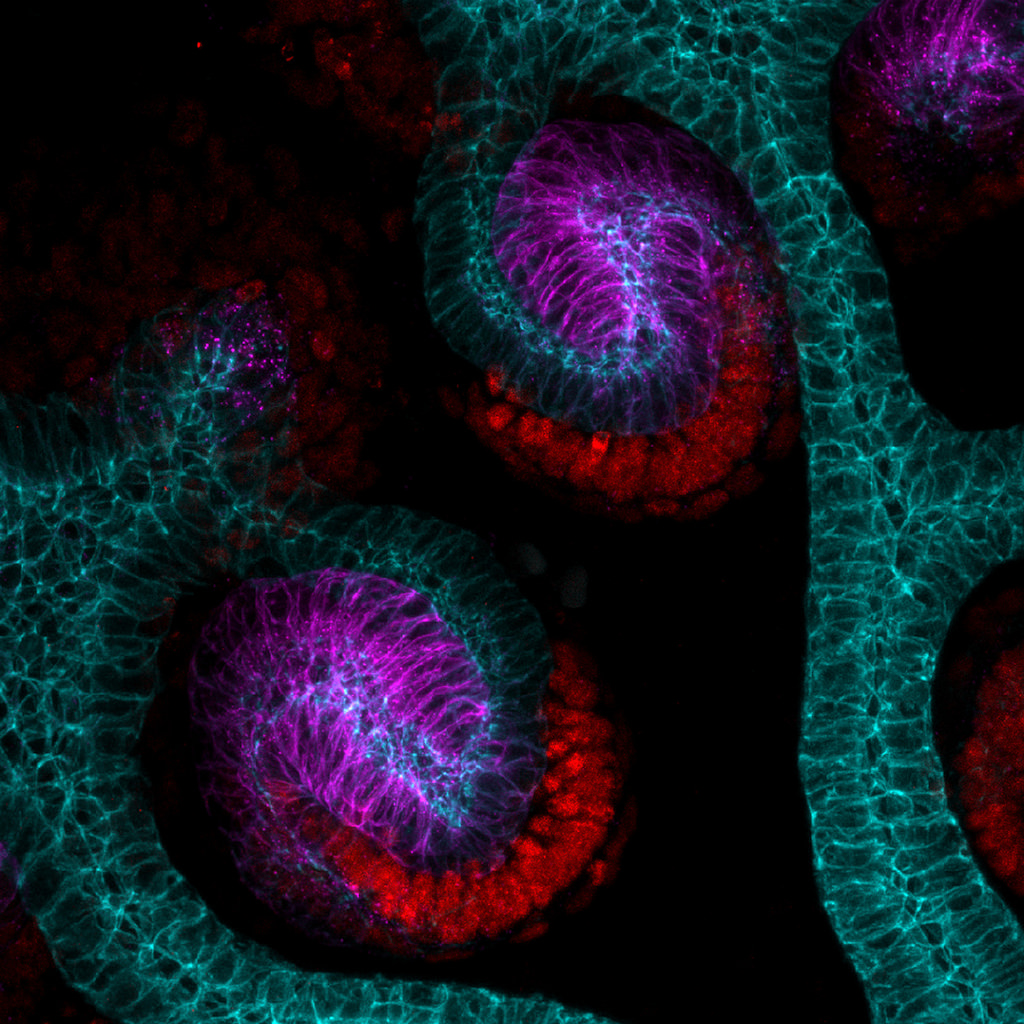} };
    \zoombox[color code=red]{0.9,0.1}
    \zoombox[magnification=2]{0.875,0.5}
    \zoombox[color code=blue,magnification=2]{0.5,0.8}
	\zoombox[color code=green,magnification=2]{0.4,0.2}
\end{tikzpicture}
\end{subfigure}
\caption{Original images $1024 \times 1024$.} \label{fig:original}
\end{figure}

\subsection{Comparing preconditioners}

We now illustrate the interest of using the preconditioners described in Section \ref{sec:optimization}. We compare the cost function w.r.t. the iterations number for different methods: ISTA, FISTA, FISTA with a Jacobi preconditioner (see \eqref{eq:Jacobi_Precond}) and FISTA with a SPAI preconditioner (see \eqref{eq:SPAI_Precond}). For the Jacobi preconditioner, we optimized $\epsilon$ by trial and error in order to maximimize the convergence speed. 

As can be seen in Figure \ref{fig:comp_preconditioners}, the Jacobi and SPAI preconditioners allow reducing the iterations number significantly. We observed that the SPAI preconditioner outperformed the Jacobi preconditioner for all blurs we tested and thus recommend SPAI in general. From a practical point view, a speed-up of a factor 3 is obtained for both blurs.

\begin{figure}
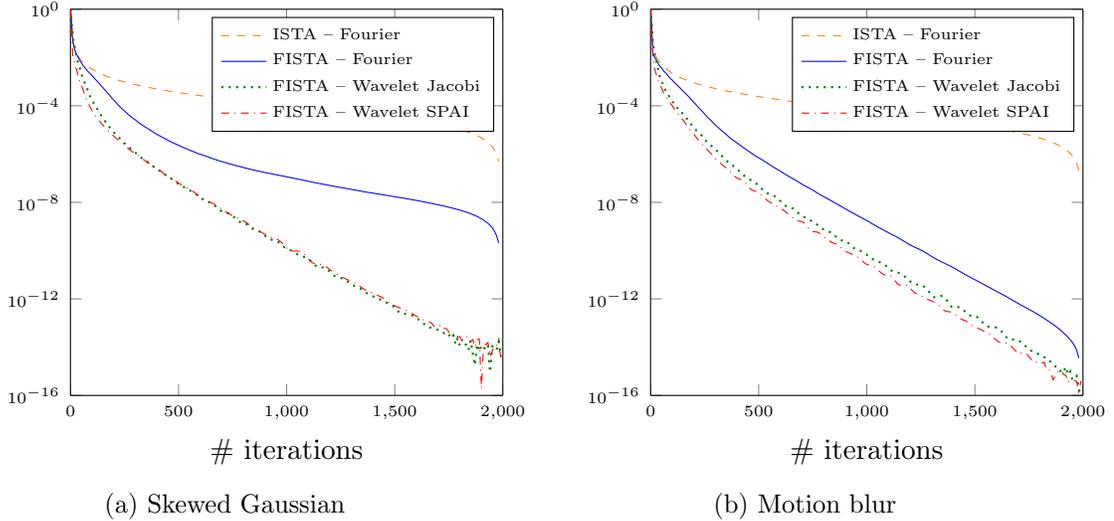

\centering
	\begin{subfigure}[b]{0.4\textwidth}
			\input{images/2015-11-30_10-54-18/deconvolution_CF_vs_iteration.tex}
			\caption{Skewed Gaussian}
	\end{subfigure}
	\qquad\qquad
	\begin{subfigure}[b]{0.4\textwidth}
		\input{images/2015-11-24_19-08-45/deconvolution_CF_vs_iteration.tex}
		\caption{Motion blur} 
	\end{subfigure}
	
	\caption{Cost function with respect to iterations for different preconditioners.}\label{fig:comp_preconditioners}
\end{figure}

\subsection{Computing times} \label{sec:computing_times}

In this paragraph, we time precisely what can be gained using the proposed approach on the two examples of Figure \ref{fig:deconv_confocal} and \ref{fig:deconv_book}. The proposed approach consists of:
\begin{itemize}
 \item Finding a number $K$ such that deconvolving the image with matrix $\Theta_K$ instead of $\Theta$ leads to a deacrease of pSNR of less than $0.2$dB.
 \item For each optimization method, finding a number of iterations $Nit$ leading to a precision 
 \begin{equation}
  E(x^{(Nit)}) - E(x^*) \leq 10^{-3} E(x^{(0)}).
 \end{equation}
\end{itemize}

In all experiments, matrix $\Theta_K$ is computed offline, meaning that we assume it is known beforehand. 
The results are displayed in Table \ref{tab:deconv_confocal} for the skewed Gaussian blur and in Table \ref{tab:deconv_book} for the motion blur. For the skewed Gaussian, the total speed-up is roughly 162, which can be decomposed as: sparsification = 7.8, preconditioning = 2.7, GPU = 7.7. For the motion blur, the total speed-up is roughly 32, which can be decomposed as: sparsification = 1.01, preconditioning = 3, GPU = 10.5. 

As can be seen from this example, the proposed sparsification may accelerate computations significantly for smooth enough blurs. On these these two examples, the preconditioning led to an acceleration of a factor 3. Finally, GPU programming allows accelerations of a factor 7-8, which is on par with what is usually reported in the literature.

Note that for the smooth blurs encountered in microscopy, the total computing time is $0.17$ seconds for a $1024\times 1024$ image, which can be considered as real-time.

\begin{table}
\centering
\begin{tabular}{|l|*{4}{c|}}
\hline
                  & Exact  & \backslashbox{FISTA}{GPU} & \backslashbox{Jacobi}{GPU} & \backslashbox{SPAI}{GPU} \\ \hline
Iterations number & 117    &  127                      &  55                        & 43 \\ \hline
Time (in seconds) & 24.30  & \backslashbox{2.57}{0.43} & \backslashbox{1.31}{0.19}  & \backslashbox{1.16}{0.15} \\ \hline
\end{tabular}
\caption{Timing and iterations number depending on the method. The number of operations per pixel is 2.46. This experiment corresponds to the Skewed Gaussian blur in Figure \ref{fig:deconv_confocal}.}\label{tab:deconv_confocal}
\end{table}

\begin{table}
\centering
\begin{tabular}{|l|*{4}{c|}}
\hline
                  & Exact  & \backslashbox{FISTA}{GPU} & \backslashbox{Jacobi}{GPU} & \backslashbox{SPAI}{GPU}  \\ \hline
Iterations number & 107    & 107                       & 52                         & 36                        \\ \hline
Time (in seconds) & 20.03  & \backslashbox{16.7}{1.82} & \backslashbox{7.48}{0.89}  & \backslashbox{6.54}{0.62} \\ \hline
\end{tabular}
\caption{Timing and iterations number depending on the method. The number of operations per pixel is 39.7. This experiment corresponds to the motion blur in Figure \ref{fig:deconv_book}.}\label{tab:deconv_book}
\end{table}

\subsection{Dependency on the blur kernel}

In this paragraph, we analyze the method behavior with respect to different blur kernels. We consider $5$ different types of kernels commonly encountered in applications: Gaussian blur, skewed Gaussian blur, motion blur, Airy pattern and defocus blur. For each type, we consider two different widths ($\sigma=2.5$ and $\sigma=5$). The blurs are shown in Figure \ref{fig:different_blurs}. Table \ref{tab:different_blurs} summarizes the acceleration provided by using simultaneously the sparse wavelet approximation, SPAI preconditioner and GPU programming. We used the same protocol as Section \ref{sec:computing_times}. The acceleration varies from 218 (large Airy pattern) to 19 (large motion blur). As expected, the speed-up strongly depends on the kernel smoothness. Of interest, let us mention that the blurs encountered in applications such as astronomy or microscopy (Airy, Gaussian, defocus) all benefit greatly from the proposed approach. The acceleration factor for the least smooth blur, corresponding to the motion blur, still leads to a significant acceleration, showing that the proposed methodology can be used in nearly all types of deblurring applications.

\begin{figure}
 \centering
	\begin{subfigure}[b]{0.18\textwidth}
			\includegraphics[width=\textwidth]{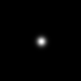}
	\end{subfigure}
	\begin{subfigure}[b]{0.18\textwidth}
			\includegraphics[width=\textwidth]{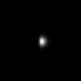}
	\end{subfigure}
	\begin{subfigure}[b]{0.18\textwidth}
			\includegraphics[width=\textwidth]{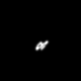}
	\end{subfigure}
	\begin{subfigure}[b]{0.18\textwidth}
			\includegraphics[width=\textwidth]{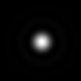}
	\end{subfigure}
	\begin{subfigure}[b]{0.18\textwidth}
			\includegraphics[width=\textwidth]{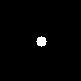}
	\end{subfigure}
	
	\begin{subfigure}[b]{0.18\textwidth}
			\includegraphics[width=\textwidth]{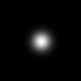}
			\caption{Gaussian}
	\end{subfigure}
	\begin{subfigure}[b]{0.18\textwidth}
			\includegraphics[width=\textwidth]{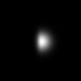}
			\caption{Skewed}
	\end{subfigure}
	\begin{subfigure}[b]{0.18\textwidth}
			\includegraphics[width=\textwidth]{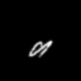}
			\caption{Motion}
	\end{subfigure}
	\begin{subfigure}[b]{0.18\textwidth}
			\includegraphics[width=\textwidth]{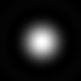}
			\caption{Airy}
	\end{subfigure}
	\begin{subfigure}[b]{0.18\textwidth}
			\includegraphics[width=\textwidth]{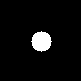}
			\caption{Defocus}
	\end{subfigure}
	\caption{The different blurs used to analyze the method's efficiency.}\label{fig:different_blurs}
\end{figure}

\begin{table}
\centering
\begin{tabular}{|l|c|c|c|c|c|}
  \thickhline
   Blur 			& Time (Fourier) 	& Time (Proposed)	& Speed-up	& \# ops per pixels \\ \thickhline
   Gaussian (small) 		& 14.8			& 0.15			& 99 		& 4 \\ \hline
   Gaussian (large)		& 17.8			& 0.14			& 127 		& 2 \\ \hline	
   Skewed (small) 		& 11.2			& 0.11			& 100 		& 10 \\ \hline
   Skewed (large)		& 10.5			& 0.1			& 102 		& 4  \\ \hline
   Motion (small) 		& 6.0 			& 0.26 			& 23 		& 80 \\ \hline
   Motion (large)		& 9.7			& 0.51			& 19 		& 80 \\ \hline
   Airy (small)			& 15.16			& 0.081			& 187 		& 4 \\ \hline
   Airy (large)			& 18.6 			& 0.085 		& 218		& 2  \\ \hline
   Defocus (small)		& 20.2 			& 0.23			& 87		& 20 \\ \hline
   Defocus (large)		& 21.89 		& 0.20			& 110		& 10 \\ \thickhline
\end{tabular}
\caption{Speed-up of $\ell^1$-$\ell^2$ deconvolution with respect to the different blur kernels, see Figure \ref{fig:different_blurs}. }\label{tab:different_blurs}
\end{table}
\subsection{Dependency on resolution}

In this paragraph, we aim at illustrating that the method efficiency increases with resolution. To this end, we deconvolve the phantom in \cite{guerquin2012realistic} with resolutions ranging from $512\times 512$ to $4096 \times 4096$. The convolution kernel is a Gaussian. Its standard deviation is chosen as $\sigma = 2^L/200$, where $2^L$ is the number of pixels in each direction. This choice is the natural scaling that ensures resolution invariance. We then reproduce the experiment of the previous section to evaluate the speed-up for each resolution. The results are displayed in Table \ref{tab:deconv_resolution}. As can be seen, the speed-up increases significantly with the resolution, which could be expected, since as resolution increases, the kernel's smoothness increases. Of interest, note that 1.35 seconds is enough to restore a $4096\times 4096$ image.

\begin{table}
\centering
\begin{tabular}{|l|c|c|c|c|}
  \thickhline
   Resolution			& 512 	& 1024	& 2048	& 4096	\\ \thickhline
   Time (Fourier) 		& 3.19	& 17.19	& 76	& 352	\\ \hline
   Time Wavelet + GPU + SPAI   	& 0.07	& 0.25 	& 0.55	& 1.35	\\ \hline	
   Total Speed-up  		& 44 	& 70	& 141	& 260	\\ \hline
   Speed-up sparse  		& 4.1	& 4.5	& 9.6	& 9.7	\\ \hline
   Speed-up SPAI 		& 2.4	& 2.2	& 2.1	& 2.7	\\ \hline
   Speed-up GPU			& 4.5	& 7.1	& 7.0	& 10.0	\\ \thickhline
\end{tabular}
\caption{Speed-up of $\ell^1$-$\ell^2$ deconvolution with respect to the image resolution. }\label{tab:deconv_resolution}
\end{table}
%
%
%
%
%


\section*{Acknowledgments}

The authors wish to thank Manon Dugu\'e for a preliminary numerical study of the proposed idea. 
They thank Sandrine Anthoine, Caroline Chaux, Hans Feichtinger, Clothilde M\'elot and Bruno Torr\'esani for fruitful discussions and encouragements, which motivated the authors to work on this topic. 

\bibliographystyle{abbrv}
\bibliography{biblio}

\appendix
 
\end{document}